\documentclass[11pt,leqno]{amsart}
\usepackage{amsmath,amsfonts,amssymb,amscd,amsthm,amsbsy,upref}

%%%%%%%%%%%%%%%%%%%%%%%%%%%%%%%%%%%%%%%%%%%%%%%

% added by Andras on 170428

\renewcommand{\preceq}{\preccurlyeq}
\renewcommand{\succeq}{\succcurlyeq}
\renewcommand{\ge}{\geqslant}
\renewcommand{\geq}{\geqslant}
\renewcommand{\le}{\leqslant}
\renewcommand{\leq}{\leqslant}

%%%%%%%%%%%%%%%%%%%%%%%%%%%%%%%%%%%%%%%%%%%%%%%%

\textheight=8.6truein
\textwidth=6truein \hoffset=-.25truein
%\voffset=-.5truein
%\usepackage{showkeys}

%%%%%%%%%%%%%%%%%%%%%%%%%%%%%

%\numberwithin{equation}{section}
\def\hangbox to #1 #2{\vskip3pt\hangindent #1\noindent \hbox to #1{#2}$\!\!$}

%%%%%%%  ENVIRONMENT SETTINGS %%%%

\newtheorem{thm}{Theorem}[section]
\newtheorem{lem}[thm]{Lemma}
\newtheorem{cor}[thm]{Corollary}
\newtheorem{prop}[thm]{Proposition}
\theoremstyle{definition}
\newtheorem{ex}[thm]{Example}

\newtheorem{defin}[thm]{Definition}

\theoremstyle{remark}
\newtheorem{rem}[thm]{Remark}
\newtheorem{rems}[thm]{Remarks}

%%%%%%  MACROS %%%%%%%%%%%%%%

\def\N{{\mathbb N}}
\def\R{{\mathbb R}}

%%%%%%%%%%%%%%%%%%%%%%%%%%%%%%

%%%%%%%%%%%%%%%%%%%%%%%%%%%%%%

%%%%%%%%%%%%%%%%%%%%%%%%%%%%%%

\def\cT{{\mathcal T}}

\def\cB{{\mathcal B}}

\def\cF{{\mathcal F}}

\def\cN{{\mathcal N}}
\def\cU{{\mathcal U}}

\def\cV{{\mathcal V}}

%%%%%%%%%%%%%%%%%%%%%%%%%%%%%%%%%%%%

%%%%%%%%%%%%%%%%%%%%%%%%%%%%%%%%%%%%%%%%%%%%

\newcommand\kin{\!\in\!}

%%%%%%%%%%%%%%%%%%%%%%%%%%%%%%%%%%%%%%%%%%%%%%%%

\newcommand{\fw}{\text{\fw}}

%%%%%%%%%
\newcommand{\ie}{{\it i.e.}}

%%%%%%%%%%%%%%%%%%%%%%%%%%%%

\def\vp{\varepsilon}

%%%%%%%%%%%%%%%%%%%%%%%%%%%%%%%%%

\newcommand{\Mt}{\widetilde M}
\newcommand{\Nt}{\widetilde N}
\newcommand{\Pt}{\widetilde P}

%%%%%%%%%%%%%%%%%%%%%%%%%%%%%%%%%%%%%%%

%%%%%%%%%%%%%%%%%%%%%%%%%%%%%%%%%%%%%%

%%%%%%%%%%%%%%%%%%%%%%%%%%

%%%%%%%%%%%%%%%%%%%%%%%%%%%%%%%%%%%%%%%%%%%%%%%%

 % S-derivative
 % S-index
 % S-index

% vectors

%
\newcommand{\Frag}{\ensuremath{\mathrm{Frag}}}
\newcommand{\diam}{\ensuremath{\mathrm{diam}}}
\newcommand{\Sz}{\ensuremath{\mathrm{Sz}}}
\newcommand{\Ord}{\text{\rm Ord}}
\newcommand{\CB}{\text{\rm CB}}
%%%%%%%%%%%%%%%%%%%%%%%%%%%%%%%%%%%

%%%%%  BODY  %%%%%%%%%%%%%%%%%
\title{Generalization of a theorem of Zippin}
\begin{document}

\author{P.~Hajek}
\address{Institute of Mathematics, Academy of Science of the Czech
 Republic,  \v Zitn\'a 25 115 67 Prague~1, Czech Republic, and
 Faculty of Electrical Engineering, Czech Technical University in
 Prague,  Zikova 4, 166 27, Prague}
\email{hajek@math.cas.cz}

\author{Th.~Schlumprecht}
\address{Department of Mathematics, Texas A\&M University, College
 Station, TX 77843, USA and Faculty of Electrical Engineering, Czech
 Technical University in Prague,  Zikova 4, 166 27, Prague}
\email{schlump@math.tamu.edu}

\author{A.~Zs\'ak}
\address{Peterhouse, Cambridge, CB2 1RD, UK}
\email{a.zsak@dpmms.cam.ac.uk}
\thanks{The first named author was supported by GA\v CR 16-07378S
 and RVO: 67985840. The second named author was supported by the
 National Science Foundation under the Grant Number  DMS--1464713.}

\begin{abstract}
 We generalize  and prove a result which was first shown by
 Zippin~\cite{Zi}, and was explicitly formulated by Benyamini
 in~\cite{Be}.
\end{abstract}

\maketitle

\section{Introduction}
\label{S:1}

In 1977, Zippin~\cite{Zi} proved a result, which was reformulated by
Benyamini~\cite[page 27]{Be} as follows. For a Banach space $X$ and
$\vp>0$ we denote by $\Sz(X,\vp)$ the $\vp$-Szlenk index of $X$ whose
definition will be recalled at the end of Section~\ref{S:3}.

\begin{thm}
 \label{T:1.1}
 Let $X$ be a Banach space with separable dual, and $\vp>0$. Let $F$
 be a $w^*$-closed totally disconnected, $(1-\vp)$-norming subset of
 $B_{X^*}$, the unit ball of $X^*$.

 Then there is a countable ordinal $\alpha<\omega^{\Sz(X,\vp/8)+1}$,
 and a subspace $Y$ of $C(F)$, which is isometrically isomorphic to
 $C[0,\alpha]$, such that for every $x\in X$ there is a $y\in Y$ with
 $\|i_F(x)-y\|\le \vp(1-\vp)^{-1}\|i_F(x)\|$, where $i_F: X\to C(F)$
 denotes the embedding defined by $i_F(x)(f)=f(x)$ for $x\in X$ and
 $f\in F$.
\end{thm}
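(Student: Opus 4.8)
The plan is to reduce the theorem to the construction of a single continuous surjection. \textbf{Step 1 (reduction).} Set $\delta=\vp/8$, write $s_\delta$ for the associated Szlenk derivation and $\beta=\Sz(X,\delta)$, so $s_\delta^\beta(B_{X^*})=\emptyset$ and hence $s_\delta^\beta(F)=\emptyset$; since $X^*$ is separable, $\beta$ is a countable ordinal and $F$ is compact metrizable. I claim it suffices to produce a continuous surjection $q\colon F\to K$ onto a compact metric space $K$ with $K^{(\beta)}=\emptyset$ such that every fibre $q^{-1}(t)$ has diameter at most $\delta$ in the norm of $X^*$. Indeed, $K$ is then a countable compact metric space, so by the Mazurkiewicz--Sierpi\'{n}ski classification it is homeomorphic to $[0,\alpha]$ for a unique countable ordinal $\alpha$; as the Cantor--Bendixson index is a topological invariant and $[0,\alpha]^{(\gamma)}=\emptyset$ exactly when $\alpha<\omega^\gamma$, we get $\alpha<\omega^\beta<\omega^{\Sz(X,\vp/8)+1}$. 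Put $Y=\{h\circ q:h\in C(K)\}\subseteq C(F)$; since $q$ is surjective, $h\mapsto h\circ q$ is a linear isometry of $C(K)$ onto $Y$, and $C(K)$ is isometric to $C[0,\alpha]$. For the approximation, fix $x\in X$ and set $g^\pm(t)=\max/\min\{f(x):f\in q^{-1}(t)\}$; applying $q$ to the closed sets $\{f:\pm f(x)\ge a\}$ shows $g^+$ is upper semicontinuous and $g^-$ lower semicontinuous, and smallness of the fibres gives $0\le g^+-g^-\le\delta\|x\|$ pointwise. Thus the upper semicontinuous function $g^+-\delta\|x\|$ lies below the lower semicontinuous function $g^-$, so by the Kat\v{e}tov--Tong insertion theorem ($K$ being metric, hence normal) there is $h\in C(K)$ with $g^+-\delta\|x\|\le h\le g^-$; then $y=(h+\tfrac{\delta}{2}\|x\|)\circ q\in Y$ satisfies $|i_F(x)(f)-y(f)|\le\tfrac{\delta}{2}\|x\|$ for all $f\in F$, so $\|i_F(x)-y\|\le\tfrac{\vp}{16}\|x\|\le\tfrac{\vp}{16}(1-\vp)^{-1}\|i_F(x)\|\le\vp(1-\vp)^{-1}\|i_F(x)\|$, using that $F$ is $(1-\vp)$-norming.

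\textbf{Step 2 (construction of $q$).} I would prove by induction on $\gamma$ that every $w^*$-closed totally disconnected $F'\subseteq B_{X^*}$ with $s_\delta^\gamma(F')=\emptyset$ admits a continuous surjection onto a countable compact metric space $K'$ with $(K')^{(\gamma)}=\emptyset$ and all fibres of diameter $\le\delta$. (By $w^*$-compactness the $\delta$-Szlenk index of a $w^*$-compact set is never a limit ordinal, so limit stages are vacuous.) If $s_\delta(F')=\emptyset$, then by total disconnectedness and compactness $F'$ is a finite disjoint union of clopen sets of diameter $\le\delta$, and we map it onto the corresponding finite discrete space. If $\gamma=\beta_0+1\ge2$, let $A=s_\delta^{\beta_0}(F')$; then $s_\delta(A)=\emptyset$, so $A=\bigsqcup_{i\le m}W_i$ with each $W_i$ clopen in $A$ of diameter $\le\delta$, and we choose pairwise disjoint clopen-in-$F'$ sets $\tilde W_i\supseteq W_i$ with $\tilde W_i\cap A=W_i$. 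Using that $s_\delta$ localises to clopen sets ($s_\delta(C)=s_\delta(F')\cap C$), the clopen remainder $R=F'\setminus\bigsqcup_i\tilde W_i$ and each collar $\tilde W_i\setminus W_i$ — which, being open in $F'$, is an increasing union $\bigcup_kC_k^i$ of clopen sets — are disjoint from $A$, so their $\beta_0$-th Szlenk derivative vanishes; applying the inductive hypothesis to $R$ and to each level $D_k^i=C_k^i\setminus C_{k-1}^i$ gives surjections onto countable compact metric spaces with empty $\beta_0$-th derivative. Glue these: send all of $W_i$ to one new point $p_i$, declared to be the unique accumulation point of the images of the levels $D_k^i$ (the increasing choice of the $C_k^i$ makes the resulting map $\tilde W_i\to\tilde K_i:=\{p_i\}\cup\bigsqcup_kK_k^i$ continuous), then take the disjoint union over $i$ together with the image of $R$. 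All fibres keep diameter $\le\delta$, and since each $K_k^i$ is clopen in $\tilde K_i$ with $(K_k^i)^{(\beta_0)}=\emptyset$, one checks $\tilde K_i^{(\sigma)}\subseteq\{p_i\}\cup\bigcup_k(K_k^i)^{(\sigma)}$ for $\sigma\ge1$, so $\tilde K_i^{(\beta_0)}\subseteq\{p_i\}$ and $\tilde K_i^{(\gamma)}=\emptyset$; together with $(K_R)^{(\beta_0)}=\emptyset$ this yields $(K')^{(\gamma)}=\emptyset$. Taking $\gamma=\beta$, $F'=F$ produces the map $q$ of Step~1.

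The main obstacle is precisely this inductive construction: the sets $s_\delta^\xi(F)$ are $w^*$-closed but not clopen, so they cannot be peeled off directly — one must pass to clopen hulls $\tilde W_i$ and then tame the non-compact collars $\tilde W_i\setminus W_i$ by cutting them into clopen levels, all while arranging the gluing so that the quotient map stays continuous and carefully accounting for how the new accumulation points $p_i$ move the Cantor--Bendixson derivative; it is this collar step that is responsible for the single extra power of $\omega$ in $\omega^{\Sz(X,\vp/8)+1}$, since $p_i$ survives one derivative past the pieces $K_k^i$. Everything else — the reduction to finite clopen partitions away from $s_\delta(F)$, the passage between the index of $K$ and the ordinal $\alpha$, and the final approximation via semicontinuity and the insertion theorem — is routine given the tools above. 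In particular the constant $\vp/8$ is far more generous than needed (any $\delta\le2\vp$ would already give the stated estimate), so no delicacy with constants arises.
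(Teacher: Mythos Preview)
Your argument is correct and takes a genuinely different route from the paper. The paper does not prove Theorem~\ref{T:1.1} directly---it is quoted as a result of Zippin and Benyamini---but instead establishes the generalizations Theorem~\ref{T:5.1} and Corollary~\ref{C:5.2}. There the domain is first realised as the branch space $[T]$ of a tree, and the quotient is produced in a single pass: for each branch $b$ one selects a canonical basic clopen neighbourhood $N_b$ whose intersection with the appropriate $d$-derivative of $[T]$ has small diameter, the resulting family $\cN=\{N_b:b\in[T]\}$ is shown to be a well-founded tree under containment whose tree topology agrees with the quotient topology (Theorem~\ref{T:4.2} and Proposition~\ref{P:4.6}), and the final approximation uses a partition of unity (Lemma~\ref{lem:general-topology}). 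You instead build the quotient by transfinite induction along the Szlenk derivation itself: at each successor stage you partition the innermost surviving derivative into finitely many small clopen pieces, thicken these to clopen hulls in $F$, slice the open collars into countably many clopen levels, and glue by adjoining a single limit point per piece; your approximation step replaces the partition of unity by Kat\v{e}tov--Tong insertion. The paper's construction never uses metrizability---only compactness and the special tree basis---which is precisely what lets Theorem~\ref{T:5.1} cover non-separable Asplund spaces; your argument leans on metrizability in several places (countable clopen exhaustions of open sets, Mazurkiewicz--Sierpi\'{n}ski) but is more elementary and closer in spirit to Zippin's original proof. As a side effect your inductive bookkeeping in fact yields $\alpha<\omega^{\Sz(X,\vp/8)}$, sharper than both the stated bound $\omega^{\Sz(X,\vp/8)+1}$ and the $\omega^{\Sz(X,\vp/2)+\omega}$ of Corollary~\ref{C:5.2}.
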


The goal of our paper is to prove a generalization of this theorem
which includes non-separable Banach spaces, and at the same time we
provide a more conceptual proof of Zippin's result (see
Theorem~\ref{T:5.1} and Corollary~\ref{C:5.2} in Section~\ref{S:5}).

The paper is organized as follows. In Section~\ref{S:1} we recall the
definition of trees, and introduce the tree topology defined on the
set $[T]$ of all branches of a tree $T$. This topology is generated by
a basis consisting of clopen sets and, if $T$ has finitely many roots,
$[T]$ with this topology is compact. Fragmentation indices of
topological spaces with respect to a pseudo-metric will be recalled in
Section~\ref{S:3}. As a particular example we define the Szlenk index
of a Banach space at the end of Section~\ref{S:3}. Section~\ref{S:4}
represents the heart of the proof of our main result (see
Theorem~\ref{T:4.2}) which might be interesting in its own right. It
is shown that if $T$ is a tree and $d$ is a pseudo-metric on $[T]$ so
that $[T]$ is fragmentable with respect to $d$, then for every $\vp>0$
there is a subset $\cN$ of the basis of the topology of $[T]$ which
forms a well-founded tree with respect to containment so that $\cN$
(in this case equal to $[\cN]$) with its tree topology is a quotient
of $[T]$, and so that  for all $M\in\cN$ the $d$-diameter of
$M\setminus\bigcup_{N\subsetneq M, N\in\cN} N$ is smaller than
$\vp$. In Section~\ref{S:5} we present and prove the generalization of
Zippin's theorem.

\section{Trees and the tree topology}
\label{S:2}

Let $T$ be a tree, which means that $T$ is a set with a reflexive
partial order denoted by $\preceq$ (where we introduce the notation
$x\prec y \iff  x\preceq y$ and $x\neq y$), with the property that for
each $t\in T$, the \emph{set of predecessors of $t$},
\[
b_t=\{ s\in T:\, s\preceq t\}
\]
is finite and linearly ordered. An \emph{initial node }of $T$ is a
minimal element of $T$, \ie, an element $t\in T$ for which
$b_t=\{t\}$. If $t\in T$ is not an initial node, then
$b_t\setminus\{t\}$ is not empty and we call the maximum of
$b_t\setminus \{t\}$  the \emph{direct predecessor of $t$.} A
\emph{successor of $u\in T$ }is an element $w\in T$ so that $u\prec w$
and it is called a \emph{direct successor of }$u$ if, moreover, there
is no $v\in T$ with $u\prec v\prec w$, in other words if $u$ is the
direct predecessor of $w$. The set of all direct successors of $u$ is
denoted by $S_u$.

A \emph{branch of $T$ }is a non-empty, linearly ordered subset of $T$
which is closed under taking direct predecessors.

\begin{rem}
 We note that for a branch $b$ it follows that if $t\in b$ and
 $s\prec t$ then also $s\in b$, and
 \begin{itemize}
 \item
   either $b$ is finite, in which case  there is a unique $t$ so that
   $b=b_t =\{ s\in T:\,s\preceq t\}$ (take $t$ to be the maximal
   element of $b$),
 \item
   or $b$ is infinite, in which case $b$ is a maximal linearly
   ordered set in $T$.
 \end{itemize}
 Indeed, if $b$ is infinite, then we can find
 $t_j\in b$, $j\in\N$, with $t_1\prec t_2\prec t_3\prec\dots$
 (choose $t_1\in b$ arbitrarily, then $t_2\in b\setminus b_{t_1}$,
 then $t_3\in b\setminus b_{t_2}$, etc.). We claim that
 $b=\bigcup_{j=1}^\infty b_{t_j}$. Indeed, since $b$ is closed under
 taking predecessors, we have
 $\bigcup_{j=1}^\infty b_{t_j}\subset b$, and if there were an element
 $t\in b\setminus\bigcup_{j=1}^\infty b_{t_j}$, then $t_j\prec t$ for
 all $j\in\N$, and thus $b_t$ would be infinite, which is a
 contradiction.  Finally we note that $\bigcup_{j=1}^\infty b_{t_j}$
 is a maximal linearly ordered set, because if for some
 $t\in T\setminus\bigcup_{j=1}^\infty b_{t_j}$ the set
 $\bigcup_{j=1}^\infty b_{t_j}\cup \{t\}$ were also linearly
 ordered, it would follow as before that $t\succ t_j$ for all
 $j\in\N$, and would contradict the assumption that $b_t$ is
 finite. Conversely, if $(t_j)$ is any strictly increasing sequence
 in $T$ then $b=\bigcup_{j=1}^\infty b_{t_j}$ is an infinite branch.
\end{rem}

We will identify a finite branch $b$ of $T$ with the element $t\in T$
such that $b=b_t$, and hence identify the tree $T$ with the set of all
its finite branches. We call the set of all infinite branches the
\emph{boundary of $T$ }and denote it by $\partial T$. If $s\in T$ and
$b\in \partial T$, we also write $s\prec b$ if $s\in b$. The set of
all branches of $T$ is denoted by $[T]$. Thus, we have
$[T]=T\cup\partial T$. The tree $T$ is called \emph{well-founded }if
$\partial T=\emptyset$. We define the \emph{ordinal  index
}$\mathrm{o}(T)$ of a well-founded tree $T$ as follows. For every
subset $S$ of $T$ we put
$S'=\{ s\in S:\,s \text{ is not maximal in }S\}$. Note that since $T$
is well-founded, if $S$ is not empty then $S'\subsetneq S$. Then we
put $T^{(0)}=T$, and by transfinite  induction for any ordinal
$\alpha$ we define
\[
T^{(\alpha)}=\big(T^{(\gamma)}\big)'\text{ if $\alpha=\gamma+1$ for
 some $\gamma$, and }T^{(\alpha)}=\bigcap_{\gamma<\alpha}
T^{(\gamma)} \text{ if $\alpha$ is a limit ordinal}.
\]
Since $T$ is assumed to be well-founded, it follows that
\[
\mathrm{o}(T)=\min\big\{\alpha\in\Ord:\,T^{(\alpha)}=\emptyset\big\}
\]
exists. Note that since the sets
$T^{(\alpha)}\setminus T^{(\alpha+1)}$, $\alpha<\mathrm{o}(T)$, are
non-empty and pairwise disjoint, it follows that if $T$ is countable,
then so is $\mathrm{o}(T)$.

Let $T$ be an arbitrary tree. We define a locally compact topology on
$[T]$, the \emph{tree topology }on $[T]$, as follows. For $t\in T$ we
put
\[
U_t=\{ b\in [T]:\,t\in b\}
\]
and let the \emph{tree topology }be generated by the set
\[
\big\{U_t, [T]\setminus U_t :\,t\in T \big\}.
\]
We call $[T]$ with its tree topology the \emph{tree space of $T$}.

Note that for $s,t\in T$, either $U_{s}\subset U_t$, or
$U_t\subset U_s$, or $U_t\cap U_s=\emptyset$. Indeed, using the
properties of trees and the definition of branches, it follows that if
$s\preceq t$, then $U_t\subset U_s$, if $t\preceq s$, then $U_s\subset
U_t$, and if $s$ and $t$ are incomparable then $U_t\cap
U_s=\emptyset$. It follows that
\[
\cB=\Big\{U_t\setminus \bigcup_{j=1}^n U_{s_j}:\,n\in \N_0,\
s_1,s_2,\ldots, s_n\in S_t\Big\}\cup \{\emptyset\}
\]
is stable under taking finite intersections, and thus is a basis of
the tree topology consisting of clopen sets. We note that for $b\in
\partial T$,
\[
\cU_b=\{ U_t:\,t\in b\}
\]
is a neighbourhood basis of $b$, and for a finite branch $b=b_t$,
\[
\cU_{b}=\Big\{ U_t\setminus \bigcup_{s\in F} U_s :\,F\subset S_t\text{
 finite}\Big\}
\]
is a neighbourhood basis of $b$. In particular, if $t$ has only
finitely many direct successors, then the singleton $\{b_t\}$ is
clopen.

\begin{prop}
 \label{P:2.2}
 $[T]$ is a locally compact Hausdorff space, and $[T]$ is compact if
 and only if $T$ has finitely many initial nodes.
\end{prop}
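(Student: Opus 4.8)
The plan is to verify the three claims — Hausdorff, local compactness, and the compactness criterion — using the clopen basis $\cB$ and the neighbourhood bases $\cU_b$ described just before the proposition. For the \emph{Hausdorff} property, take two distinct branches $b\neq b'$ in $[T]$. Since they differ as subsets of $T$, there is (after possibly swapping them) a node $t$ with $t\in b$ and $t\notin b'$; because both are branches (closed under direct predecessors), we may take $t$ to be minimal such, so that the direct predecessor $s$ of $t$ — if $t$ is not an initial node — lies in both $b$ and $b'$. If $b'$ is a finite branch $b_s$ we separate $b$ and $b'$ by $U_t$ and $U_s\setminus U_t$; if $b'$ passes through some direct successor $s'\neq t$ of $s$, or through an initial node distinct from the one under $t$, we separate by $U_t$ and $U_{s'}$ (disjoint since $t,s'$ are incomparable). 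In the remaining case $t$ is an initial node of $b$ and $b'$ uses a different initial node, so $U_t$ and the complement of $U_t$ separate them. In every case we have produced disjoint open sets, so $[T]$ is Hausdorff.

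For \emph{local compactness} it suffices to exhibit, around each branch $b$, a compact neighbourhood. I will show each basic set $V=U_t\setminus\bigcup_{j=1}^n U_{s_j}$ with $s_1,\dots,s_n\in S_t$ is compact; since such sets form a neighbourhood basis of every point, this gives local compactness (and, taking $n=0$, shows each $U_t$ is compact). Given an open cover of $V$ refine it to a cover by basic clopen sets from $\cB$. Suppose for contradiction it has no finite subcover. Among the direct successors of $t$ other than $s_1,\dots,s_n$, the sets $U_u$ partition $U_t\setminus\bigcup_j U_{s_j}$ together with the singleton $\{b_t\}$ (which is covered by one basic set, hence — shrinking — we may assume that basic set is $U_t$ minus finitely many $U_u$'s, so all but finitely many $U_u$ are already finitely covered). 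Hence some $U_u$ with $u\in S_t$ has no finite subcover, and we may pass to it. Iterating, we build $t=t_0\prec t_1\prec t_2\prec\cdots$ with each $U_{t_k}$ not finitely covered; this strictly increasing sequence determines an infinite branch $\beta=\bigcup_k b_{t_k}\in\partial T$, which lies in $V$. Some member $W$ of the cover contains $\beta$, and since $\cU_\beta=\{U_{t}:t\in\beta\}$ is a neighbourhood basis of $\beta$, we have $U_{t_k}\subset W$ for some $k$, contradicting that $U_{t_k}$ has no finite subcover. Hence $V$ is compact.

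For the \emph{compactness criterion}: if $T$ has finitely many initial nodes $r_1,\dots,r_m$, then $[T]=\bigcup_{i=1}^m U_{r_i}$ is a finite union of compact sets (by the previous paragraph), hence compact. Conversely, if $T$ has infinitely many initial nodes $(r_i)_{i\in\N}$, then $\{U_{r_i}:i\in\N\}$, together with the open sets $[T]\setminus U_{r_i}$, witness non-compactness: more directly, the sets $U_{r_i}$ are pairwise disjoint nonempty clopen sets whose union is $[T]$, giving an open cover with no finite subcover.

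The step I expect to be the main obstacle is the local-compactness argument, specifically handling the finite-branch point $b_t$ inside a basic neighbourhood and making the "pass to a child with no finite subcover" induction rigorous: one must be careful that after removing finitely many $U_u$'s that are finitely covered, the remaining part is still covered by the $U_u$'s for the other $u\in S_t$ plus the single point $b_t$, and that the point $b_t$ is absorbed into a basic clopen set of the form $U_t\setminus\bigcup U_u$. Everything else (Hausdorff, the compactness criterion) is a routine unwinding of the definitions of $U_t$ and the neighbourhood bases.
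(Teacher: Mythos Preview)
Your proposal is correct and follows essentially the same route as the paper's proof: separate distinct branches using a clopen $U_t$, prove compactness of the basic clopen sets via a K\"onig-lemma argument that builds an infinite branch from a hypothetical cover with no finite subcover, and deduce the compactness criterion from the finite decomposition $[T]=\bigcup_i U_{r_i}$.

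Two simplifications worth noting. First, your Hausdorff argument is more elaborate than necessary: once you have $t\in b\setminus b'$, the clopen sets $U_t$ and $[T]\setminus U_t$ already separate $b$ and $b'$, with no need to take $t$ minimal or to do a case analysis on the shape of $b'$. Second, the paper proves only that each $U_t$ is compact rather than every basic set $U_t\setminus\bigcup_{j=1}^n U_{s_j}$; this suffices for local compactness (since $U_t$ is itself a neighbourhood of every branch containing $t$) and avoids the first, slightly awkward, step of your induction where you must absorb the point $b_t$ and the removed $U_{s_j}$'s. Your worry about that step is justified but the analysis goes through: a basic set containing $b_t$ is of the form $U_r\setminus\bigcup U_{s'_j}$ with $r\preceq t$, and either $r\prec t$ (in which case it contains all of $U_t$) or $r=t$ (in which case it covers all but finitely many $U_u$, $u\in S_t$).
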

\begin{proof}
If $b$ and $b'$ are two different branches, and, say, without loss of
generality $t\in b\setminus b'$ then $U_t$ is a neighbourhood of $b$
and $[T]\setminus U_t$ a neighbourhood of $b'$. Thus $[T]$ is
Hausdorff.

In order to show that $[T]$ is locally compact, and also the claimed
equivalence, it is enough to show that $U_t$ is compact for each
$t\in T$. Thus, let $\cU$ be an open cover of $U_t$, and assume $\cU$
has no finite subcover of $U_t$. Then there is a $t_1\succ t$ so that
$\cU$ has no finite subcover of $U_{t_1}$. Indeed, since $t\in U_t$,
one of the elements of $\cU$ must contain a subset of the form
$U_t\setminus \bigcup_{j=1}^n U_{s_j}$ with $n\in\N$ and each
$s_j\in S_t$, which means that for one of the $s_i$ it follows that
$\cU$ has no finite subcover of $U_{s_i}$. Inductively, we can find an
increasing sequence $t\prec t_1\prec t_2\prec\ldots$ so that $\cU$ has
no finite subcover of $U_{t_i}$, $i=1,2,\ldots$. Let
$b=\bigcup_{i=1}^\infty b_{t_i}$ be the branch generated by these
$t_i$. Since $b\in U_t$, there must be a $U\in\cU$ for which $b\in U$.
This implies that there must be an $s\in b$ so that
$b\in U_s\subset U$. For large enough $n\in\N$ we have $t_n\succeq s$,
and thus $U_{t_j}\subset U$ for all $j\ge n$, which is a
contradiction.
\end{proof}
We shall call a tree $T$ \emph{compact }if the corresponding
tree space $[T]$ is compact, \ie, when $T$ has finitely many initial
nodes.

\begin{rem}
 Branches of $T$ are certain subsets of $T$. Thus we can think of the
 tree space $[T]$ as a subset of $\{0,1\}^T$. The tree topology of
 $[T]$ is simply the restriction to $[T]$ of the product topology of
 $\{0,1\}^T$.
\end{rem}

We next recall the definition of the Cantor--Bendixson index of a
compact topological space $K$. For a closed set $F\subset K$ we put
\[
d(F)=\{ \xi\in F:\,\xi\text{ is not an isolated point in $F$}\}\ .
\]
We put $d_0(K)=K$. By transfinite induction we define $d_\alpha(K)$
for ordinals $\alpha$ as follows: $d_\alpha(K)=d\big(d_\gamma(K)\big)$
if $\alpha=\gamma+1$ for some $\gamma$, and
$d_\alpha(K)=\bigcap_{\gamma<\alpha}d_\gamma(K)$ if $\alpha$ is a
limit ordinal. It follows that there must be an ordinal $\alpha_0$ for
which $d_{\alpha_0}(K)=d_{\alpha_0+1}(K)=d_{\alpha_0+2}(K)=\ldots $,
and if in that case $d_{\alpha_0}(K)=\emptyset$, we define the
\emph{Cantor--Bendixson index of $K$ }to be
\[
\CB(K)=\min\{\alpha\in \Ord:\,d_\alpha(K)=\emptyset\}\ ,
\]
otherwise we put $\CB(K)=\infty$.

Now we assume that $T$ is a well-founded tree with finitely many
initial nodes, and we want to compare $\mathrm{o}(T)$ with $\CB(T)$.
Since every maximal element in any subset $S$ of $T$ is isolated in
$S$, it follows that
\begin{equation}
 \label{E:2.1}
 \CB(T)\le \mathrm{o}(T).
\end{equation}

It follows from results in general topology that if $K$ is a
non-empty, countable, compact space, then $\CB(K)=\beta+1$ for a
countable ordinal $\beta$, and $|d_\beta(K)|=n$ for some $0<n<\omega$,
and moreover $K$ is homeomorphic to the ordinal interval
$[0,\omega^\beta\cdot n]$. We next give a direct proof of this fact in
the special case when $K$ is a well-founded, countable, compact tree
with its tree topology. We shall also prove the converse that every
countable successor ordinal (and thus every countable, compact
topological space) is homeomorphic to a well-founded, countable,
compact tree.

\begin{thm}
 \label{thm:trees-vs-ordinals}
 Let $(T,\preceq)$ be a countable, well-founded tree. Then there is
 an ordinal $\beta\leq \omega^{\mathrm{o}(T)}$ such that $T$ with its
 tree topology is homeomorphic to the ordinal interval $[0,\beta)$.

 Conversely, given a countable ordinal $\beta$, the interval
 $[0,\beta)$ is homeomorphic to a countable, well-founded tree.
\end{thm}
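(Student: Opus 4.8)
\emph{Proof plan.}
The plan is to prove the two assertions separately, in each case reducing to \emph{rooted} trees, i.e.\ trees with a single initial node, whose tree space is compact by Proposition~\ref{P:2.2}. (Since $T$ is well-founded we have $[T]=T$, so ``$T$ with its tree topology'' is the space $[T]$ introduced before Proposition~\ref{P:2.2}.) The main structural input is the following \emph{subtree index lemma}: if $T$ is a non-empty well-founded rooted tree with root $r$ and, for $u\in S_r$, we write $T_u=\{s\in T:u\preceq s\}=U_u$ for the rooted subtree above $u$, then $\mathrm{o}(T)$ is a successor ordinal and $\mathrm{o}(T)=\delta+1$ where $\delta=\sup_{u\in S_r}\mathrm{o}(T_u)$; in particular $\mathrm{o}(T_u)<\mathrm{o}(T)$ for every $u\in S_r$. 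I would prove this by transfinite induction on the derivation level $\gamma$, run simultaneously over all rooted trees, of the statement that $r\in T^{(\gamma)}\iff T^{(\gamma)}\neq\emptyset\iff\gamma<\mathrm{o}(T)$. The two routine ingredients are that each derived set $T^{(\gamma)}$ is downward closed under $\preceq$, and that $T^{(\gamma)}\cap T_u=(T_u)^{(\gamma)}$ (an element of $T_u$ is maximal in a subset $S\subseteq T$ iff it is maximal in $S\cap T_u$, since all of its successors lie in $T_u$). From this one reads off that $T^{(\gamma)}=\{r\}\cup\bigcup_{u\in S_r}(T_u)^{(\gamma)}$, that $\mathrm{o}(T)$ cannot be a limit (else $r$ would survive into $T^{(\mathrm{o}(T))}=\emptyset$), and that $T^{(\gamma)}$ collapses to $\{r\}$ precisely at $\gamma=\delta$.

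For the first assertion I prove, by transfinite induction on $\mathrm{o}(T)$, that every non-empty well-founded rooted tree $T$ is homeomorphic to $[0,\beta)$ for some successor ordinal $\beta<\omega^{\mathrm{o}(T)}$. The base case $T=\{r\}$ is clear. For the inductive step write $\mathrm{o}(T)=\gamma+1$ with $\gamma=\sup_{u\in S_r}\mathrm{o}(T_u)$ as above. If $S_r$ is finite, then $U_r\setminus\bigcup_{u\in S_r}U_u=\{r\}$ is clopen, so $T$ is the topological disjoint union of the one-point space $\{r\}$ and the spaces $T_u$, $u\in S_r$; applying the inductive hypothesis to each $T_u$ (legitimate since $\mathrm{o}(T_u)\leq\gamma<\mathrm{o}(T)$) and concatenating the resulting intervals as order types identifies $T$ with $[0,\beta)$, where $\beta$ is one plus a finite sum of ordinals each of which, being the length of an interval realizing some $T_u$, is $<\omega^{\mathrm{o}(T_u)}\leq\omega^{\gamma}$; additive indecomposability of $\omega^{\gamma}$ then gives $\beta\leq\omega^{\gamma}<\omega^{\gamma+1}=\omega^{\mathrm{o}(T)}$. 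If $S_r$ is (countably) infinite, enumerate $S_r=\{u_n:n\in\N\}$, let $T_{u_n}\cong[0,\beta_n)$, and set $\lambda_n=\beta_1+\dots+\beta_n$, $\lambda=\sup_n\lambda_n$; since the sets $U_{u_n}$ are clopen and the neighbourhood basis of $r$ consists of the complements of their finite unions, the natural bijection $T\to[0,\lambda]$ carrying $T_{u_n}$ onto the block of order type $\beta_n$ and $r$ onto $\lambda$ is a continuous bijection from a compact space onto a Hausdorff space, hence a homeomorphism; and $\lambda_n<\omega^{\gamma}$ for all $n$ forces $\lambda\leq\omega^{\gamma}$, so $\beta=\lambda+1\leq\omega^{\gamma}+1<\omega^{\mathrm{o}(T)}$. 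Finally, an arbitrary countable well-founded tree $T$ is the topological disjoint union of the rooted trees $U_t$ over its initial nodes $t$ (these are clopen and partition $T$), and $\mathrm{o}(T)=\sup_t\mathrm{o}(U_t)$; concatenating the intervals obtained above and invoking additive indecomposability of $\omega^{\mathrm{o}(T)}$ once more yields $T\cong[0,\beta)$ with $\beta\leq\omega^{\mathrm{o}(T)}$ (with strict inequality when there are only finitely many initial nodes).

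For the converse I first prove, by transfinite induction on $\eta$, that every ordinal interval $[0,\eta]$ with $\eta$ countable is homeomorphic to a countable well-founded \emph{rooted} tree (clearly all trees built below are well-founded and countable). If $\eta=0$, take a single node. If $\eta=\xi+1$, take the rooted tree obtained from a rooted tree realizing $[0,\xi]$ by adjoining a new root whose unique direct successor is the old root; then the new root is isolated, so the tree space is $\{*\}\sqcup[0,\xi]\cong[0,\xi+1]$. If $\eta$ is a limit, then $\mathrm{cf}(\eta)=\omega$; choose an increasing sequence of successor ordinals $0=\mu_0<\mu_1<\mu_2<\dots$ with $\sup_n\mu_n=\eta$, write $\mu_n=\mu_{n-1}+(\beta_n+1)$ (the increment is a successor, so $\beta_n<\eta$ is well-defined), use the inductive hypothesis to obtain rooted trees realizing $[0,\beta_n]$, and attach all of them as the subtrees above a common new root; by the infinite-$S_r$ computation above the tree space is $[0,\sum_n(\beta_n+1)]=[0,\eta]$. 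The converse of the theorem now follows: $[0,0)=\emptyset$ is the empty tree; if $\beta$ is a successor then $[0,\beta)=[0,\beta-1]$ is already handled; and if $\beta$ is a limit, write $\beta=\sum_n(\delta_n+1)$ with $\delta_n<\beta$ as above, realize each $[0,\delta_n]$ by a rooted tree, and take the disjoint union of these trees — a countable well-founded tree whose tree space is $\bigsqcup_n[0,\delta_n]\cong[0,\beta)$.

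The step I expect to demand the most care is the bookkeeping in the first assertion: one must simultaneously control the homeomorphism type and the ordinal bound through the two cases (finite versus infinite $S_r$), and the bound genuinely relies on the \emph{strict} inequality $\beta_u<\omega^{\mathrm{o}(T_u)}$ — which uses that $U_u$ is compact, so that $\beta_u$ is a successor while $\omega^{\mathrm{o}(T_u)}$ is a limit — together with the additive indecomposability of powers of $\omega$. The subtree index lemma, though conceptually the heart of the first part, is a routine transfinite induction once it is phrased to range over all rooted trees at a fixed derivation level.
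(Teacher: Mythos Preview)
Your proof is correct and follows essentially the same approach as the paper: induct on the ordinal index (equivalently, on the derivation level $\alpha(t)$) to show that each rooted subtree $U_t$ is homeomorphic to a closed ordinal interval of length at most $\omega^{\alpha(t)}$, concatenating the intervals of the direct-successor subtrees at each step, and then deduce the converse by the analogous inductive construction. The only organizational differences are that you make the subtree index lemma explicit, split into finite versus infinite $S_r$, and handle the non-rooted case by decomposing over initial nodes, whereas the paper simply adjoins a new root to $T$ and applies the rooted case once.
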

\begin{proof}
 For $t\in T$ let $\alpha(t)$ be the ordinal $\alpha<\mathrm{o}(T)$
 such that $t\in [T]^{(\alpha)}\setminus [T]^{(\alpha+1)}$. We first
 show that for each $t\in T$ there is an ordinal $\beta\leq
 \omega^{\alpha(t)}$ and a homeomorphism $\varphi\colon
 U_t\to[0,\beta]$ with $\varphi(t)=\beta$. We shall proceed by
 induction on $\alpha(t)$.

 If $\alpha(t)=0$, then $U_t=\{t\}$, so the claim follows. Now assume
 that $\alpha=\alpha(t)>0$. Then $S_t\neq\emptyset$ and by definition
 of $\alpha(t)$, if $s\in S_t$, then $s\notin [T]^{(\alpha)}$, and
 thus $\alpha(s)<\alpha$. Enumerate $S_t$ as a (finite or infinite)
 sequence $s_1, s_2, \dots$. By induction hypothesis, each clopen set
 $U_{s_n}$ is homeomorphic to $[0,\beta_n]$ for some ordinal
 $\beta_n\leq \omega^{\alpha(s_n)}$. Set $\gamma_0=0$ and
 $\gamma_n=\gamma_{n-1}+\beta_n+1$ for $n\geq 1$. Let
 $\beta=\sup_{n\geq 1} (\gamma_n+1)$. Since $\beta_n<\omega^\alpha$ for
 all $n$, it follows that $\beta\leq \omega^\alpha$. For each
 $n\geq 1$, the interval $[\gamma_{n-1},\gamma_n)$ is
 order-isomorphic to $[0,\beta_n]$. Since $[0,\beta)$ is the disjoint
 union of the clopen intervals $[\gamma_{n-1},\gamma_n)$, it follows
 that $\bigcup_n U_{s_n}$ is homeomorphic to $[0,\beta)$. Let
 $\varphi$ be such a homeomorphism. We extend $\varphi$ to $U_t$ by
 setting $\varphi(t)=\beta$. Under $\varphi$ the basic neighbourhood
 $U_t\setminus \bigcup_{1\leq k\leq n} U_{s_k}$ of $t$ corresponds to
 the basic neighbourhood $[\gamma_n,\beta]$ of $\beta$. Hence
 $\varphi\colon U_t\to [0,\beta]$ is a homeomorphism with
 $\varphi(t)=\beta$. This completes the proof of the induction step.

 We now finish the proof of the first half of the theorem as
 follows. We join a root to $T$ by adding a new element $r$ to $T$
 and by declaring $r\prec t$ for all $t\in T$. Put
 $T^*=T\cup\{r\}$. An easy induction shows that
 $\big( T^{(\alpha)} \big)^*= \big( T^* \big)^{(\alpha)}$ for all
 $\alpha\leq \mathrm{o}(T)$, and hence in $T^*$ we have
 $\alpha(r)=\mathrm{o}(T)$. By our initial claim, there is an ordinal
 $\beta\leq\omega^{\mathrm{o}(T)}$ and a homeomorphism $\varphi\colon
 T^*\to [0,\beta]$ with $\varphi(r)=\beta$. Thus, $T$ is homeomorphic
 to the interval $[0,\beta)$, as required.

 For the converse statement, it is enough to prove that for all
 $\beta<\omega_1$ there is a countable, well-founded tree
 $(T,\preceq)$ with one initial node $r$ and a homeomorphism
 $\varphi\colon T\to [0,\beta]$ with $\varphi(r)=\beta$. Indeed, it
 then follows that the interval $[0,\beta)$ is homeomorphic to
 $T\setminus \{r\}$ with the subspace topology which is easily seen
 to be the same as its tree topology. We proceed by induction on
 $\beta$.

 For $\beta=0$ we take $T$ to be the tree with one element. If $T$ is
 a suitable tree for some $\beta$, then $T^*$, \ie, $T$ with a new
 root adjoined, works for $\beta+1$. Let us now assume that $\beta$
 is a countable limit ordinal. Then there is an increasing sequence
 $(\gamma_n)_{1\leq n<\omega}$ of successor ordinals with
 $\beta=\sup\gamma_n$. Set $\gamma_0=0$ and for each $1\leq n<\omega$
 choose $\beta_n$ with $\gamma_n=\gamma_{n-1}+\beta_n+1$. Since
 $\beta_n<\beta$, by induction hypothesis, there is a well-founded,
 countable tree $(T_n,\preceq_n)$ with one initial node $s_n$
 homeomorphic to the interval $[\gamma_{n-1},\gamma_n)$. Let
 $(T,\preceq)$ be the disjoint union of the $T_n$ together with a new
 element $r$ such that for all $s,t\in T$ we have $s\preceq t$ if and
 only if either $s,t\in T_n$ and $s\preceq_n t$ for some $1\leq
 n<\omega$, or $s=r$. Then $T$ is a well-founded, countable tree with
 one initial node $r$, and moreover $S_r=\{s_n:\,1\leq n<\omega\}$
 and $U_{s_n}=T_n$ for each $n$. As we have seen in the proof of the
 first half of the theorem, in this situation there is a
 homeomorphism $\varphi\colon T\to [0,\beta]$ with
 $\varphi(r)=\beta$. 
\end{proof}

\begin{rems}
 Since for $\beta>0$ the interval $[0,\beta)$ is compact if and only
 if $\beta$ is a successor ordinal, it follows from
 Theorem~\ref{thm:trees-vs-ordinals} that a non-empty, countable,
 well-founded, compact tree $T$ is homeomorphic to $[0,\beta]$ for
 some ordinal $\beta<\omega^{\mathrm{o}(T)}$.

 Another consequence of the above theorem is that a countable,
 well-founded tree $(T,\preceq)$ has a well-ordering, and the
 corresponding order topology is the same as the tree topology. This
 ordering can be described explicitly as follows. Adjoin a root $r$
 to $T$, and set $T^*=T\cup\{r\}$. This allows us to refer to the set
 of $\preceq$-minimal elements of $T$ as $S_r$. For each $t\in T^*$
 fix a well-ordering $<$ of $S_t$ with order type at most
 $\omega$. We then extend $<$ to a linear ordering of $T$ as
 follows. For $s,t\in T$, we let $s<t$ if and only if either
 $t\prec s$, or there exist $u\in T^*$ and $v,w\in S_u$ such that
 $v\preceq s$, $w\preceq t$ and $v<w$.
\end{rems}

We next give an example of an uncountable compact space that can also
be realized as a tree space. This example will be important later.

\begin{ex}
 \label{Ex:2.6} Let $D$ be the {\em Cantor set}, \ie, the set
 $D=\{0,1\}^\N$ endowed with the product topology of the discrete
 topology on $\{0,1\}$. Denote by $[\N]$, $[\N]^{<\omega}$ and
 $[\N]^{\omega}$ the subsets of $\N$, the finite subsets of $\N$, and
 the infinite subsets of $\N$, respectively. Each element of $D$ can
 be identified in a canonical way with an element of $[\N]$, and vice
 versa, by letting $A_\sigma=\{n: \sigma_n=1\}$ for
 $\sigma=(\sigma_n)\in D$. Via this identification $[\N]$ becomes a
 compact, metrizable space.

 Now we will show that this topology on $[\N]$ is identical with the
 tree topology on the branches of a tree $T$. Indeed, let
 $T=[\N]^{<\omega}$, on which we consider the tree structure given by
 extension.  For $A=\{a_1,a_2,\dots , a_m\}$ and
 $B=\{b_1, b_2, \dots, b_l\}$, both sets written in increasing order,
 we say that \emph{$B$ is an extension of $A$}, or that \emph{$A$ is
   an initial segment of $B$}, and write $A\prec B$, if $l>m$ and
 $a_i=b_i$ for $i=1,2,\dots ,m$.  This turns $T$ into a tree whose
 only initial node is $\emptyset$ and it is easy to see that
 $[T]=[\N]$ and $\partial T=[\N]^{\omega}$.  Here we identify any
 $A=\{a_1,a_2,\dots \}\in [\N]^{\omega}$ (written in increasing
 order) with the branch
 $b=\big\{ \{a_1,a_2,\ldots ,a_n\}:\,n=0,1,2,\dots \big\}$.

 If $A\in[\N]^{<\omega}$, then  $S_A=\{ A\cup\{n\}:\,n>\max(A)\}$,
 and $U_A=\{ B\in[\N] \ :\,B\succeq A\}$. For $n\ge \max(A)$ we put
 \[
 U_{A,n}=\{ C\succeq A:\,C\setminus A\subset [n+1,\infty)\}
 \]
 (thus $U_{A,\max(A)}=U_A$). Then $\cB=\{
 U_{A,n}:\,A\in[\N]^{<\omega}, n\ge \max(A)\}$ is a basis of the
 product topology consisting of clopen sets.

 By definition, $U_A$ is also clopen in the tree topology. If
 $\cF\subset S_A$ is finite, then
 \begin{align*}
   U_A\setminus\bigcup_{B\in\cF} U_B&=\{ D\succeq A:\,\forall
   B\in \cF\quad \max(B)\neq \min (D\setminus A)\}\ .
 \end{align*}
 Thus, if we put $N=\max\big\{\max(B) :\,B\in \cF\big\}$,
 it follows that
 \[
 U_{A,N}=U_A\setminus \bigcup_{n=\max(A)+1}^N U_{A\cup\{n\}}\subset
 U_A\setminus\bigcup_{B\in\cF} U_B\subset U_A\ .
 \]
 This implies that the product topology on $[\N]$ and the tree
 topology coincide.
\end{ex}

We conclude this section with a well-known folklore result in
topology. For the convenience of the reader we include the proof.

\begin{lem}
 \label{lem:general-topology}
 Let $K$ be a compact Hausdorff space, $\vp>0$, and
 $f_1\colon K\to \R$ a function such that every point of $K$ has a
 neighbourhood on which the oscillation of $f_1$ is at most
 $\vp$. Then there is a continuous function $f\colon K\to\R$ such
 that $|f(x)-f_1(x)|\leq\vp$ for all $x\in K$.
\end{lem}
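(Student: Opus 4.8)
The plan is to build $f$ as a suitable convex combination of constants via a partition of unity subordinate to a finite cover of $K$ by sets on which $f_1$ varies little.

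First, I would note that the oscillation of $f_1$ is monotone under inclusion of sets, so each $x\in K$ has an \emph{open} neighbourhood on which the oscillation of $f_1$ is at most $\vp$ (pass to the interior of the given neighbourhood). By compactness of $K$, finitely many such open sets $U_1,\dots,U_n$ cover $K$. For each $i$ I would fix a point $x_i\in U_i$ and set $c_i=f_1(x_i)$.

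Next, I would produce a continuous partition of unity $(\varphi_i)_{i=1}^n$ on $K$ with $\varphi_i\ge 0$, $\sum_{i=1}^n\varphi_i\equiv 1$, and $\{\varphi_i\ne 0\}\subset U_i$ for each $i$. This is standard for a compact Hausdorff, hence normal, space: by the shrinking lemma there are open sets $V_i$ with $\overline{V_i}\subset U_i$ and $\bigcup_i V_i=K$; by Urysohn's lemma there are continuous $g_i\colon K\to[0,1]$ with $g_i\equiv 1$ on $\overline{V_i}$ and $g_i\equiv 0$ off $U_i$; then $g:=\sum_{i=1}^n g_i\ge 1$ everywhere, so the functions $\varphi_i:=g_i/g$ have the required properties.

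Finally, I would set $f=\sum_{i=1}^n c_i\varphi_i$, which is continuous, and estimate, for $x\in K$, using $\sum_i\varphi_i(x)=1$,
\[
|f(x)-f_1(x)|=\Bigl|\sum_{i=1}^n\varphi_i(x)\bigl(c_i-f_1(x)\bigr)\Bigr|\le\sum_{i=1}^n\varphi_i(x)\,|c_i-f_1(x)|.
\]
If $\varphi_i(x)\ne 0$ then $x\in U_i$; since also $x_i\in U_i$ and the oscillation of $f_1$ on $U_i$ is at most $\vp$, we get $|c_i-f_1(x)|=|f_1(x_i)-f_1(x)|\le\vp$. Hence $|f(x)-f_1(x)|\le\vp\sum_i\varphi_i(x)=\vp$, which finishes the proof. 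I do not expect any genuine obstacle: the argument is routine, and the only point requiring mild care is the construction of the partition of unity — specifically, ensuring the normalizing denominator $\sum_i g_i$ stays bounded below by a positive constant, which is precisely why one first passes to the shrunk cover $(V_i)$ with $\bigcup_i\overline{V_i}=K$.
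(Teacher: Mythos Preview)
Your proof is correct and follows essentially the same approach as the paper: cover $K$ by finitely many open sets on which $f_1$ has oscillation at most $\vp$, take a subordinate partition of unity, and set $f=\sum_i f_1(x_i)\varphi_i$. You supply more detail (passing to open neighbourhoods, the explicit construction of the partition of unity via the shrinking lemma and Urysohn), but the argument is identical to the paper's.
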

\begin{proof}
 By assumption, the family of open subsets of $K$ on which the
 oscillation of $f_1$ is at most $\vp$ is an open cover for $K$, and
 hence contains a finite subcover $U_1,\dots,U_n$. Let
 $\varphi_1,\dots,\varphi_n$ be a partition of unity subordinate to
 the cover $U_1,\dots,U_n$. Thus, $\varphi_j\colon K\to [0,1]$ is a
 continuous function whose support is contained in $U_j$ for each
 $j=1,\dots,n$ such that
 $\sum_{j=1}^n\varphi_j(x)=1$ for all $x\in K$. For each
 $j=1,\dots,n$ fix $x_j\in U_j$, and define $f\colon K\to\R$ by
 setting $f(x)=\sum_{j=1}^n f_1(x_j)\varphi_j(x)$, $x\in K$. Then $f$
 is continuous and
 \[
 |f(x)-f_1(x)| \leq \sum_{j=1}^n \varphi_j(x) | f_1(x_j)-f_1(x) | \leq
 \vp\qquad \text{for all }x\in K\ ,
 \]
 as required.
\end{proof}

%%%%%%%%%%%%%%%%%%%%%%%%%%%%%%%%%%%%%%%%%%
\section{Fragmentation indices}
\label{S:3}

In this section we recall some well known notation and results on
fragmentation indices.  All of the results below, and much more, may
be found in books on topology and descriptive set theory (for
example~\cite{Do}). Nevertheless, for better reading, we would like to
recall the results we will need here. We also do this because we
consider fragmentations of topological spaces with respect to
pseudo-metrics, and not only metrics.

\begin{defin}
 \label{D:3.1}
 Let $(X,\cT)$ be a topological space and $d(\cdot,\cdot)$ a
 pseudo-metric on $X$. We say that $(X,\cT)$ is
 \emph{$d$-fragmentable }if for all non-empty closed subsets $F$ of
 $X$ and all $\vp>0$ there is an open set $U\subset X$ so that
 $U\cap F\neq\emptyset$ and $d$-$\diam(U\cap F)<\vp$.
\end{defin}

The following statement is a well known corollary of the Baire
Category Theorem.
\begin{thm}
 \label{T:3.2}
 Let $(X,\cT)$ be a Polish space (\ie, separable and completely
 metrizable), and let $d$ be a pseudo-metric on $T$ so that all
 closed $d$-balls, $B_r(x)= \{y\in X: d(x,y)\le r\}$ with $r>0$ and
 $x\in X$, are closed in $X$ with respect to $\cT\!\!$.

 Then $(X,\cT)$ is $d$-fragmentable if and only if $(X,d)$ is
 separable.
\end{thm}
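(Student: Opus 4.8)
The statement is an equivalence, and I would prove the two implications separately; ``$(X,d)$ separable $\Rightarrow$ $(X,\cT)$ is $d$-fragmentable'' is the routine one — and it is where the hypothesis on $d$-balls is used — so let me dispose of it first. Fix a countable $d$-dense set $\{x_n:n\in\N\}$, a non-empty $\cT$-closed set $F\subseteq X$, and $\vp>0$. Every point of $X$ lies within $d$-distance $\vp/3$ of some $x_n$, so $F=\bigcup_n\bigl(F\cap B_{\vp/3}(x_n)\bigr)$; by hypothesis each ball $B_{\vp/3}(x_n)$ is $\cT$-closed, hence each $F\cap B_{\vp/3}(x_n)$ is closed in the subspace $F$. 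Since $F$ is a closed subset of a Polish space it is itself Polish, hence a Baire space, so some $F\cap B_{\vp/3}(x_{n_0})$ has non-empty interior relative to $F$: there is a $\cT$-open $U$ with $\emptyset\neq U\cap F\subseteq B_{\vp/3}(x_{n_0})$, and then $d$-$\diam(U\cap F)\le 2\vp/3<\vp$. This is exactly $d$-fragmentability; the hypothesis was used precisely to make the pieces $F\cap B_{\vp/3}(x_n)$ closed so that the Baire Category Theorem applies.

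For the converse I would argue by contraposition. If $(X,d)$ is not separable, then a maximal $1/n$-separated subset of $X$ (which exists by Zorn's lemma and is automatically $1/n$-dense) is uncountable for some $n$ — otherwise the union of these sets would be a countable $d$-dense set — so, setting $\vp=1/n$, we obtain an uncountable $A\subseteq X$ with $d(a,a')\ge\vp$ for all distinct $a,a'\in A$. Being Polish, $X$ is second countable; fix a countable base $\{V_k\}$, put $W=\bigcup\{V_k:V_k\cap A\text{ is countable}\}$, and let $C=X\setminus W$. Then $C$ is $\cT$-closed, $A\setminus C=A\cap W$ is countable, so $A\cap C$ is uncountable and in particular $C\neq\emptyset$; moreover $C$ is exactly the set of condensation points of $A$, since every basic neighbourhood of a point of $C$ meets $A$ in an uncountable set. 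Applying $d$-fragmentability to the non-empty closed set $C$ yields a $\cT$-open $U$ with $U\cap C\neq\emptyset$ and $d$-$\diam(U\cap C)<\vp$. Pick $x\in U\cap C$; as $x$ is a condensation point of $A$ and $U$ is a neighbourhood of $x$, the set $U\cap A$ is uncountable, hence so is $U\cap A\cap C$ (because $A\setminus C$ is countable). Choosing distinct $a,a'\in U\cap A\cap C$ we get $d(a,a')\ge\vp$ since $a,a'\in A$, and at the same time $d(a,a')\le d$-$\diam(U\cap C)<\vp$ — a contradiction. Hence $(X,d)$ is separable.

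The step I expect to be the real obstacle is choosing the correct closed set to insert into the definition of $d$-fragmentability in the converse: the argument must be run on the condensation set $C$ of $A$ rather than on $A$ or its closure, because it is only for $C$ that every relatively open piece of small $d$-diameter is forced to capture two of the $\vp$-separated points of $A$. A more laborious alternative, closer to the theme of this section, would be to iterate the fragmentation derivative $F\mapsto F'_\vp$, where $F'_\vp$ is the set of $x\in F$ such that $d$-$\diam(U\cap F)\ge\vp$ for every $\cT$-open $U\ni x$: $d$-fragmentability makes this map strictly decreasing on non-empty closed sets, second countability forces the iteration to reach $\emptyset$ at some countable ordinal $\gamma_\vp$, and selecting, for each $\alpha<\gamma_\vp$, one point from each member of a countable subcover of the Lindel\"of set $X^{(\alpha)}_\vp\setminus X^{(\alpha+1)}_\vp$ by $\cT$-open sets of $d$-$\diam<\vp$ produces a countable $\vp$-dense set; taking the union over $\vp=1/k$ then gives a countable $d$-dense set. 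I would present the condensation-point argument, as it is shorter and self-contained.
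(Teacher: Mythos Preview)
Your proof is correct and follows essentially the same route as the paper's: the Baire Category Theorem applied to a covering of $F$ by closed $d$-balls for the ``$\Leftarrow$'' direction, and the passage from an uncountable $\vp$-separated set $A$ to a closed set of condensation-type points for the ``$\Rightarrow$'' direction. The only cosmetic differences are that the paper takes $F=\overline{A\setminus B}^{\cT}$ (the closure of the non-isolated part of $A$) rather than the full condensation set $C$, and exhibits $F$ directly as a witness to non-fragmentability rather than deriving a contradiction; your handling of the radii in the Baire step is in fact a bit more careful than the paper's.
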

\begin{proof}
 ``${\Leftarrow}$" Let $F\subset X$ be $\cT$-closed and
 $\vp>0$. Choose $D\subset F$ dense in $(F,d)$ and countable, and
 then note that $F$ can be written as the countable union of
 $\cT$-closed sets in the following way:
 \[
 F=\bigcup_{a\in D} F\cap B_\vp(a)\ .
 \]
 By the Baire Category Theorem there must therefore be an $a\in D$ so
 that $F\cap B_\vp(a)$ has a non-empty interior with respect to the
 subspace topology defined by $\cT$ on $F$.

 \noindent
 ``${\Rightarrow}$" Assume that $(X,d)$ is not separable.  We need to
 find $\vp>0$ and a $\cT$-closed set $F$ in $X$ which has the
 property that $d$-$\diam(U\cap F)\ge \vp$ for all $\cT$-open
 $U\subset X$ with $U\cap F\not=\emptyset$.

 Since $(X,d)$ is not separable, we find an uncountable $A\subset X$
 and an $\vp>0$ so that $d(x,z)>\vp$ for all $x\not= z$ in $A$. Set
 \[
 B=\{ x\in A :\,\exists\ U\in\cT \text{ such that } x\in U \text{ and
 } U\cap A \text{ is countable}\}\ .
 \]
 Then $\cU=\{U\in\cT :\,U\cap A \text{ countable}\}$ is an open cover
 for $B$, and hence it has a countable subcover $\cV$. (Here we are
 using the fact that a Polish space is second countable, and hence so
 are all its subsets.) It follows that
 $B=\bigcup_{V\in\cV} (B\cap V)$ is countable. Consider the
 $\cT$-closed subset $F=\overline{A\setminus B}^{\cT}$ of $X$. Let
 $U$ be a $\cT$-open subset of $X$ with $U\cap F\neq\emptyset$. Then
 $U\cap (A\setminus B)\neq\emptyset$, and hence $U\cap (A\setminus
 B)$ is uncountable by the definition and countability of $B$. It
 follows that $U\cap (A\setminus B)$ has at least two elements which,
 by definition of $A$, implies that $d$-$\diam (U\cap F)\geq\vp$.
\end{proof}

\begin{defin}
 \label{D:3.3}
 Let $(X,\cT)$ be a topological space and $d(\cdot,\cdot)$ a
 pseudo-metric on $X$. For a closed set $F\subset X$ and $\vp>0$ we
 define the \emph{$\vp$-derivative of $F$ }by
 \begin{align*}
   F'_{\vp}&=F\setminus\bigcup\{ U:\,U\subset X\text{ is $\cT$-open
     and } d\text{-}\diam(U\cap F)<\vp\}\\ &=\big\{ \xi\in
   F:\,\forall\,U\kin \cU_\xi\ d\text{-}\diam(U\cap F)\geq
   \vp\big\}\ ,
 \end{align*}
 where $\cU_\xi$ is the set of all open neighbourhoods of $\xi$ (with
 respect to $\cT$). For every ordinal $\alpha$ we define the
 \emph{$\vp$-derivative of $F$ of order $\alpha$}, denoted
 $F^{(\alpha)}_\vp$, by transfinite induction:
 \begin{align*}
   &F^{(0)}_\vp=F, \quad
   F^{(\alpha)}_\vp=\big(F^{(\gamma)}_\vp\big)'_\vp\text{ if
     $\alpha=\gamma+1$, and } F^{(\alpha)}_\vp =\bigcap_{\gamma<\alpha}
   F^{(\gamma)}_\vp \text{ if $\alpha$ is a limit ordinal.}
 \end{align*}
\end{defin}

Let $(X,\cT)$ be a topological space, $d(\cdot,\cdot)$ a pseudo-metric
on $X$, $F$ be a $\cT$-closed subset of $X$, and $\vp>0$. First we
note that if $F^{(\alpha)}_\vp=F^{(\alpha+1)}_\vp$ then
$F^{(\alpha)}_\vp=F^{(\beta)}_\vp$ for all $\beta>\alpha$. It follows
that if $F^{(\alpha)}_\vp\neq F^{(\alpha+1)}_\vp$, then
$\beta\mapsto F^{(\beta)}_\vp\setminus F^{(\beta+1)}_\vp$ defines an
injection on $\alpha$ into the power set of $F$, which is not possible
for $\alpha$ sufficiently large. Therefore there must be a minimal
ordinal $\alpha_0$ for which
\[
F^{(\alpha_0)}_\vp=F^{(\alpha_0+1)}_\vp=F^{(\alpha_0+2)}_\vp=\dots\ .
\]
We put $F^{(\infty)}_\vp=F^{(\alpha_0)}_\vp$. If $(X,\cT)$ is
$d$-fragmentable, then $F^{(\infty)}_\vp=\emptyset$.

We define the {\em $\vp$-fragmentation index of $F$ with respect to
 $d$ }by
\[
\Frag(d,F,\vp)=\Frag(F,\vp)=%
\begin{cases}
 \min\{ \beta\in \Ord:\,F^{(\beta)}_\vp=\emptyset\} & \text{if
   $F^{(\infty)}_\vp=\emptyset$,}\\
 \infty &\text{if not.}
\end{cases}
\]
Here we consider ``$\infty$'' to be outside of the class of
ordinals. Secondly, we define the \emph{fragmentation index of $F$
 with respect to $d$ }by
\[
\Frag(d,F)=\Frag(F)=\sup_{\vp>0} \Frag(F,\vp)
\]
with $\Frag(F)=\infty$ if for some $\vp>0$ we have
$\Frag(F,\vp)=\infty$. 
\begin{rem}
Assume that $(X,\cT)$ is $d$-fragmentable and that $F\subset X$ is
compact. Let $\vp>0$. Then it follows for a limit ordinal $\alpha$
for which $F^{(\gamma)}_{\vp}\not=\emptyset$ whenever $\gamma<\alpha$
that also $F^{(\alpha)}_{\vp}\not=\emptyset$. Therefore
$\Frag(d,F,\vp)$ will always be a successor ordinal.

If $(X,\cT)$ is a Polish space (and thus second countable) that is
$d$-fragmentable, then for a closed $F\subset X$ and $\vp>0$ it
follows from the fact that
$F^{(\beta)}_\vp\subsetneq F^{(\alpha)}_\vp$ for 
$\alpha<\beta\le \Frag(F,\vp)$ and from~\cite[Theorem 6.9]{Ke} that
$\Frag(F,\vp)<\omega_1$, where $\omega_1$ denotes the first
uncountable ordinal, and thus also that $\Frag(F)<\omega_1$.
\end{rem}
As an important example we consider the Szlenk index of a Banach
space, which we will introduce for general Bananch spaces, not only
for separable ones.  We call a Banach space $X$ an \emph{Asplund space
}if every separable subspace of $X$ has separable dual. This is not
the original definition of Asplund~\cite{As}, but proven to be
equivalent to it. The following equivalence is stated in~\cite{FHHMZ}
and gathers the results from~\cite{As,NP,Phe}.

\begin{thm}\cite[Theorem~11.8, p.~486]{FHHMZ}
 \label{T:3.6}
 Let $(X,\|\cdot\|)$ be a Banach space.  Then the following
 assertions are equivalent:

 {\em (i)}  $X$ is an Asplund space.

 {\em (ii)} $B_{X^*}$ with the $w^*$-topology is
 $\|\cdot\|$-fragmentable. Here $\|\cdot\|=\|\cdot\|_{X^*}$ denotes
 the dual norm on $X^*$, and $\|\cdot\|$-fragmentable means
 $d$-fragmentable, where $d$ is the induced metric defined by
 $d(x^*,y^*)=\|x^*-y^*\|$ for $x^*,y^*\in X^*$.
\end{thm}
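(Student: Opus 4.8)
Since this equivalence is essentially the classical Namioka--Phelps characterization of Asplund spaces, ``proving it'' here means reconstructing the standard argument, and the plan is to reduce both implications to the separable case. If $X$ is separable, then $(B_{X^*},w^*)$ is compact metrizable, hence Polish; the closed dual-norm balls $B_r(x^*)=\{y^*:\|x^*-y^*\|\le r\}$ are $w^*$-closed (being intersections of $w^*$-closed half-spaces); and $X$ is Asplund if and only if $X^*$ is norm-separable, because for separable $X$ every subspace $Y$ has $Y^*\cong X^*/Y^\perp$, a quotient of $X^*$. Hence Theorem~\ref{T:3.2} gives, for separable $X$,
\[
 X \text{ Asplund}\ \Longleftrightarrow\ X^*\text{ norm-separable}\ \Longleftrightarrow\ (B_{X^*},w^*)\text{ is $\|\cdot\|$-fragmentable.}
\]
For a subspace $Y\subseteq X$ write $R_Y\colon X^*\to Y^*$ for the restriction map, which is surjective (Hahn--Banach), $w^*$-to-$w^*$ continuous, and norm-nonincreasing; the task is to transfer this equivalence between $X$ and its separable subspaces along the maps $R_Y$.

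For \emph{(i)$\Rightarrow$(ii)} I would prove the contrapositive by a closing-off construction. Assume $(B_{X^*},w^*)$ is not $\|\cdot\|$-fragmentable and fix a nonempty $w^*$-closed $F\subseteq B_{X^*}$ and $\vp>0$ with $F'_\vp=F$; then every $w^*$-open set meeting $F$ contains two points of $F$ at norm-distance at least $\vp/2$. Build an increasing sequence $Y_0\subseteq Y_1\subseteq\cdots$ of separable subspaces of $X$: given $Y_n$, fix a countable base $(W^n_j)_j$ of the ($w^*$-compact metrizable, hence second countable) space $B_{Y_n^*}$, and for each $j$ with $W^n_j\cap R_{Y_n}(F)\neq\emptyset$ pick $x^*_{n,j},y^*_{n,j}\in R_{Y_n}^{-1}(W^n_j)\cap F$ with $\|x^*_{n,j}-y^*_{n,j}\|\ge\vp/2$ (possible since $R_{Y_n}^{-1}(W^n_j)$ is a $w^*$-open set meeting $F$) together with $z_{n,j}\in B_X$ satisfying $(x^*_{n,j}-y^*_{n,j})(z_{n,j})\ge\vp/4$; let $Y_{n+1}$ be the closed linear span of $Y_n$ and all the (countably many) vectors $z_{n,j}$. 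Set $Y=\overline{\bigcup_n Y_n}$, a separable space. A routine approximation argument --- any basic $w^*$-neighbourhood in $Y^*$ is, after an arbitrarily small perturbation of the finitely many vectors defining it, pulled back from some $W^n_j$ with $n$ large --- then shows that the $w^*$-compact set $R_Y(F)\subseteq B_{Y^*}$ has the property that every relatively $w^*$-open subset of it has norm-diameter at least $\vp/4$. Thus $(B_{Y^*},w^*)$ is not $\|\cdot\|$-fragmentable, so $Y^*$ is not separable by the separable case, contradicting (i).

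For \emph{(ii)$\Rightarrow$(i)} this idea breaks down, and this is the main obstacle: the fibres of $R_Y$ may have arbitrarily large norm-diameter, so $w^*$-open sets of small norm-diameter cannot be pushed through $R_Y$, and there is no reason for $\|\cdot\|$-fragmentability to pass to the $w^*$-continuous quotients $R_Y\colon B_{X^*}\to B_{Y^*}$. I would instead route through $w^*$-dentability. By Bourgain's convexity lemma, a $w^*$-compact convex subset of $X^*$ that has a relatively $w^*$-open subset of norm-diameter $<\vp$ already has a $w^*$-slice of norm-diameter $<\vp$; combined with the fragmentability hypothesis this shows that every $w^*$-compact convex subset of $X^*$ is $w^*$-dentable. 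Applying this to $\overline{\mathrm{co}}^{\,w^*}(F)$, and noting that a supporting slice $\{x^*:x^*(x)>s-\delta\}$ necessarily meets $F$ because $\sup_{x^*\in F}x^*(x)=\sup_{x^*\in\overline{\mathrm{co}}^{\,w^*}(F)}x^*(x)$, one obtains that every nonempty $w^*$-closed $F\subseteq B_{X^*}$ has $w^*$-slices of arbitrarily small norm-diameter. From here $X$ is Asplund by the classical theory of $w^*$-dentability and the Radon--Nikod\'ym property (Namioka, Phelps, Stegall): ``every bounded subset of $X^*$ is $w^*$-dentable'' is equivalent to $X$ being an Asplund space. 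A self-contained argument would instead run a further closing-off above a given separable $Y_0$, enlarging it to a separable $Y$ along whose restriction map the small $w^*$-slices of $B_{X^*}$ descend to small $w^*$-slices of $B_{Y^*}$, so that $(B_{Y^*},w^*)$ becomes $\|\cdot\|$-fragmentable; the delicate point there --- and the reason this implication is not a formal consequence of Theorem~\ref{T:3.2} --- is that the ``test sets'' to be handled at each stage form an uncountable family, so countably many new vectors per stage do not obviously suffice.
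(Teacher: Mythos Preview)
The paper does not prove Theorem~\ref{T:3.6} at all: it is stated with a citation to~\cite[Theorem~11.8, p.~486]{FHHMZ} and introduced as a result that ``gathers the results from~\cite{As,NP,Phe}''. There is therefore no proof in the paper to compare your proposal against.

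Your reconstruction of the classical Namioka--Phelps argument is a reasonable sketch, and you correctly identify the asymmetry between the two directions: a closing-off/L\"owenheim--Skolem argument handles (i)$\Rightarrow$(ii), while (ii)$\Rightarrow$(i) genuinely requires the convexity input (Bourgain's lemma on slices versus relatively $w^*$-open sets, leading to $w^*$-dentability and then the Radon--Nikod\'ym property for $X^*$). Two comments. First, in your (i)$\Rightarrow$(ii) step the phrase ``a routine approximation argument'' is hiding real work: you need that any basic $w^*$-open set in $B_{Y^*}$ meeting $R_Y(F)$ contains the image under $R_Y$ of two points of $F$ far apart in $Y^*$-norm, and for this it is important that the witnesses $z_{n,j}$ you add actually land in $Y$, so that the norm separation persists after restriction; this is exactly what your construction arranges, but it should be said. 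Second, in (ii)$\Rightarrow$(i) you ultimately appeal to ``the classical theory of $w^*$-dentability and the Radon--Nikod\'ym property'', which is itself a substantial result of Stegall and Namioka--Phelps; so your argument is not more self-contained than the paper's bare citation. If your goal is only to match the paper, a one-line reference to~\cite{FHHMZ} (or to~\cite{NP,Phe}) is all that is expected here.
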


Assume that $X$ is an arbitrary Banach space. For a $w^*$-closed
subset $F$ of $B_{X^*}$ and $\vp>0$ we denote the $\vp$-fragmentation
index of $F$ with respect to $\|\cdot\|_{X^*}$ by $\Sz(F,\vp)$ and
call it the $\vp$-\emph{Szlenk index of $F$}. The \emph{Szlenk index
 of $F$ } is then $\Sz(F)=\sup_{\vp>0} \Sz(F,\vp)$.  The
\emph{$\vp$-Szlenk index of }$X$ is then defined to be
$\Sz(B_{X^*},\vp)$ and denoted by $\Sz(X,\vp)$, and the \emph{Szlenk
 index of $X$ }is
$\Sz(X)=\sup_{\vp>0} \Sz(X,\vp)=\Sz(B_{X^*})$. Note that by
Theorem~\ref{T:3.6} above, $X$ is an Asplund space if and only if all
these indices are ordinal numbers.

\begin{rem}
 \label{rem:szlenk-of-norming-set}
 Let $K$ be a compact topological space. By identifying the elements
 of $K$ with their Dirac measure, we can think of $K$ as a compact
 subset of $B_{C(K)^*}$ which 1-norms the elements of $C(K)$.  It is
 then easy to see that $\CB(K)=\Sz(K,\vp)=\Sz(K)$ for all $0<\vp<2$.
 It follows therefore that $\CB(K)=\Sz(K)\le \Sz(C(K))$. In general
 it is not true that $\Sz(K)=\Sz(C(K))$. Nevertheless,
 in~\cite[Theorem C]{Sch} for the case of separable dual, and
 in~\cite[Theorem 1.1]{Ca} for the general case, it was shown that if
 $X$ is a Banach space and $B\subset B_{X^*}$ is compact and
 $1$-norming for $X$, then
 \begin{equation}
   \label{E:3.1}
   \Sz(X)=\min\big\{\omega^\alpha:\,\omega^\alpha\ge \Sz(B)\big\}
 \end{equation}
 if $X$ is an Asplund space, and $\Sz(X)=\Sz (B)=\infty$ otherwise.
\end{rem}

\section{Fragmentation of $[T]$}
\label{S:4}

Throughout this section we fix a tree $T$ and a pseudo-metric
$d(\cdot,\cdot)$ on its tree space $[T]$ which, we recall, is the set
of all branches of $T$ equipped with the tree topology. We assume that
$T$ is compact, \ie, that it has finitely many initial nodes or,
equivalently, that its tree space $[T]$ is compact. We also assume
that $[T]$ is $d$-fragmentable. This situation arises in the following
important example which we will later use.
\begin{ex}
 \label{Ex:4.1}
 Consider the space $C([T])$ of continuous functions on $[T]$ for our
 compact tree $T$. Let $X$ be a closed subspace of $C([T])$ and
 assume that $X$ is an Asplund space. For $b_1,b_2\in[T]$ set
 \[
 d(b_1,b_2)= \sup_{x\in B_X} | x(b_1) - x(b_2) |\ .
 \]
 Then $d(\cdot,\cdot)$ is a pseudo-metric on $[T]$ and the map
 sending $b\in[T]$ to the Dirac measure at $b$ restricted to $X$ is
 an isometry of $([T],d)$ into $(B_{X^*},\|\cdot\|)$. It follows from
 Theorem~\ref{T:3.6} above that $[T]$ is $d$-fragmentable.
\end{ex}

We now fix an $\vp>0$, and let $\eta$ be the ordinal so that
$\Frag(d,[T],\vp)=\eta+1$. We abbreviate
$[T]^{(\alpha)}=[T]^{(\alpha)}_\vp$ for $\alpha\in \Ord$. Let $\cB$ be
the family of basic open subsets of $[T]$, \ie, the sets of the form
$N=U_t\setminus \bigcup_{s\in F} U_s$, where $t\in T$ and $F$ is a
finite (possibly empty) subset of $S_t$. Note that $t$ and $F$ are
uniquely determined by $N$. Indeed, $t$ is the least element of $N$,
and then $F$ is the complement in $S_t$ of the set of minimal elements
of $N\setminus\{t\}$. We say $N$ is of \emph{type~I }if
$F=\emptyset$, otherwise we say $N$ is of \emph{type~II}. Note that
$\cB$ is partially ordered by containment: $M\preccurlyeq N$ if and
only if $M\supseteq N$. However, in general, $\cB$ is not a tree. The
following theorem is the main result of this section.
\begin{thm}
 \label{T:4.2}
 Let $T,d,\vp,\eta$ and $\cB$ be as above. Then there exists a subset
 $\cN$ of $\cB$ which is a well-founded tree under containment such
 that
 \[
 \text{$d$-$\diam$} \Big( M\setminus \bigcup_{N\in S_M} N \Big) <\vp
 \]
 for each $M\in\cN$ (where, as before, $S_M$ denotes the set of
 direct successors of $M$ in the tree $(\cN, \supset)$), and the
 ordinal index $\mathrm{o}(\cN)$ of $\cN$ satisfies 
 $\mathrm{o}(\cN)\leq\lambda +2n+2$, where $\eta=\lambda +n$,
 $\lambda$ is a limit ordinal and $n<\omega$. Moreover,
 $\bigcup \cN=[T]$, and $\cN$ has finitely many initial nodes.
\end{thm}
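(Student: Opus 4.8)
The goal is to build, by transfinite recursion on the fragmentation derivatives, a subfamily $\cN\subseteq\cB$ that is a tree under reverse inclusion, covers $[T]$, and has the desired diameter property, while controlling $\mathrm o(\cN)$. The natural idea is to assign to each branch $b\in[T]$ its \emph{rank} $r(b)=$ the unique $\alpha$ with $b\in[T]^{(\alpha)}\setminus[T]^{(\alpha+1)}$; this is well-defined and $\le\eta$ by $d$-fragmentability and compactness (so $[T]^{(\eta)}$ is a nonempty finite set and $[T]^{(\eta+1)}=\emptyset$, using the Remark after Definition~\ref{D:3.3}). The key local fact is: if $b\notin[T]^{(\alpha+1)}$ then $b$ has a basic clopen neighbourhood $N\in\cB$ with $d\text{-}\diam\big(N\cap[T]^{(\alpha)}\big)<\vp$; since $[T]^{(\alpha)}$ is $\cT$-closed, shrinking $N$ further we may even assume $N\cap[T]^{(\alpha)}=N\cap[T]^{(r(b))}$ is exactly the part of $N$ of rank $\ge\alpha$. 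The plan is to choose such neighbourhoods coherently so that the chosen sets, ordered by $\supseteq$, form a tree.

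\textbf{Construction by downward recursion on the rank level.} Start at level $\eta$: $[T]^{(\eta)}=\{b_1,\dots,b_k\}$ is finite, and compactness forces each $b_i$ to be isolated in $[T]^{(\eta)}$; moreover each such $b_i$, being a branch of a compact tree with small derivative, is actually a finite branch with only finitely many direct successors, hence $\{b_i\}$ is itself a basic clopen set --- these $k$ singletons will be the \emph{leaves} of $\cN$ that sit "deepest". Now suppose that for every branch of rank $>\alpha$ we have already chosen a basic clopen set $N_b\ni b$, and that the family $\cN_{>\alpha}=\{N_b: r(b)>\alpha\}$ is a tree under $\supseteq$ with $\bigcup\cN_{>\alpha}=[T]\setminus[T]^{(\alpha+1)}$ and each $N\setminus\bigcup_{M\in S_N}M$ has rank-$(\ge\text{something})$ part controlled. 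For each $b$ with $r(b)=\alpha$: pick a basic clopen $V_b\ni b$ with $d\text{-}\diam(V_b\cap[T]^{(\alpha)})<\vp$ and $V_b\cap[T]^{(\alpha)}=V_b\cap[T]^{(\alpha+1)}$'s complement-relative piece, i.e. $V_b$ meets $[T]^{(\alpha+1)}=\emptyset$... more precisely arrange $V_b\subseteq[T]\setminus[T]^{(\alpha+1)}$. The set $V_b$ is compact and $V_b\setminus\{b\}$ is covered by the clopen sets $\{N_c: r(c)>\alpha,\ c\in V_b\}$ already chosen; extract a finite subcover and replace $N_b$ by $V_b$ with those finitely many $N_c$'s removed if necessary --- the point is to make $V_b$ a basic clopen set one of whose direct successors in $\cB$ are exactly the finitely many already-chosen maximal $N_c\subseteq V_b$. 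This is the step that requires care: one must check that removing finitely many basic clopen subsets from a basic clopen set again yields a member of $\cB$ (it does, by the computation in Example~\ref{Ex:2.6}-style: $U_t\setminus\bigcup U_{s_j}$ with $s_j\in S_t$), and that the nesting stays a tree.

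\textbf{Covering, tree property, and the ordinal bound.} Once all levels $\alpha\le\eta$ are processed, set $\cN=\bigcup_{\alpha\le\eta}\{N_b:r(b)=\alpha\}$. That $\bigcup\cN=[T]$ follows since every $b$ lies in $N_b$; that $\cN$ has finitely many initial nodes follows because the $\supseteq$-maximal elements come from finitely many root-level basic sets (here one uses that $T$ has finitely many initial nodes plus a compactness extraction at the top level $0$). For the diameter estimate: given $M=N_b\in\cN$ with $r(b)=\alpha$, the set $M\setminus\bigcup_{N\in S_M}N$ consists only of branches of rank $\ge\alpha$ that lie in $M$ --- and these all lie in $M\cap[T]^{(\alpha)}$, whose $d$-diameter is $<\vp$ by construction. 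Finally, the bound $\mathrm o(\cN)\le\lambda+2n+2$: one shows the $\cN$-derivatives interleave with the $[T]$-derivatives, roughly $\cN^{(\delta)}$ loses the $N_b$ with small $r(b)$ at a rate that at worst doubles the finite part. Concretely, prove by induction that $\cN^{(2m)}\subseteq\{N_b: r(b)\ge\eta-m+1\}$-type containments for $m\le n$, using that each rank-level contributes at most one "splitting" derivative plus one "collapsing" derivative, while a limit block $\lambda$ contributes exactly $\lambda$; then $\cN^{(\lambda+2n+1)}$ is the set of $k$ leaves, which dies after one more derivative, giving $\mathrm o(\cN)\le\lambda+2n+2$.

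\textbf{Main obstacle.} The delicate point is the simultaneous coherence: I must choose the $N_b$ so that (a) each is genuinely in $\cB$ even after the finite surgeries, (b) two chosen sets are either nested or disjoint (so $\supseteq$ is a tree, not just a poset), and (c) the tree rank doesn't blow up past $\lambda+2n+2$. Part (b) is the crux --- for two branches $b,b'$ of the same rank $\alpha$ the chosen $V_b,V_{b'}$ need not be comparable or disjoint a priori, so one must either first refine the top-down cover into a pairwise-disjoint (hence automatically "incomparable") clopen partition of each level-slice $[T]^{(\alpha)}\setminus[T]^{(\alpha+1)}$ and its saturation, or argue that basic clopen sets in a tree space that share a point are automatically nested (true: if $b\in N\cap N'$ then their least elements $t,t'$ are both in $b$, hence comparable, so $U_t,U_{t'}$ are nested; a short case analysis on the removed successors then shows $N,N'$ themselves are nested or one is properly contained after further refinement). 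I would isolate this "basic sets sharing a point are nested-refinable" fact as a small sub-lemma and base the whole recursion on it, which also makes the ordinal bookkeeping for (c) transparent since each derivative of $\cN$ then peels off exactly the non-maximal sets at one rank level.
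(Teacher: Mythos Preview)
Your outline has the right rank function $r(b)=\alpha(b)$ and the right local ingredient (a basic neighbourhood $N_b\ni b$ with $d\text{-}\diam(N_b\cap[T]^{(\alpha(b))})<\vp$), but the construction as written has genuine gaps. (i)~The claim that $[T]^{(\eta)}$ is finite and made up of clopen singletons confuses the $d$-fragmentation derivative with the Cantor--Bendixson derivative: $b\notin[T]^{(\eta+1)}$ only says some neighbourhood of $b$ has $d$-small trace on $[T]^{(\eta)}$, not that $b$ is $\cT$-isolated there; $[T]^{(\eta)}$ can perfectly well be a Cantor set on which $d\equiv 0$. (ii)~Downward recursion on the rank is ill-posed once $\eta\ge\omega$, since the ordinals $\le\eta$ under reverse order are not well-founded; and in any case your hypotheses ``$V_b\subseteq[T]\setminus[T]^{(\alpha+1)}$'' and ``cover $V_b\setminus\{b\}$ by $\{N_c:r(c)>\alpha,\ c\in V_b\}$'' are inconsistent, because $r(c)>\alpha$ means $c\in[T]^{(\alpha+1)}$, so that family is empty. (iii)~Even granting the recursion, the surgery does not stay in $\cB$: deleting an already-chosen $N_c$ from $V_b=U_t\setminus\bigcup_{s\in F}U_s$ typically removes some $U_{s'}$ with $s'$ a deep descendant of $t$, not an element of $S_t$, and the result is clopen but not basic. (iv)~The proposed sub-lemma is false as a statement about $\cB$: $U_t\setminus U_{s_1}$ and $U_t\setminus U_{s_2}$ with distinct $s_1,s_2\in S_t$ share $b_t$ but are incomparable, and any further ``refinement'' risks destroying the diameter or covering properties you need.

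The paper bypasses all of this by making a single \emph{canonical} choice with no recursion and no surgery: if some $U_s$ with $s\in b$ already has $d\text{-}\diam(U_s\cap[T]^{(\alpha(b))})<\vp$, set $N_b=U_s$ for the \emph{least} such $s$ (type~I); otherwise $b=b_t$ is finite and $N_b=U_t\setminus\bigcup_{s\in F}U_s$ for a \emph{minimal} finite $F\subset S_t$ that works (type~II). Canonicality is exactly what forces nesting: two type~II sets with the same least node $t$ must both equal $N_{b_t}$, so the counterexample in~(iv) cannot occur, and a short case analysis (Lemma~4.3) then shows that any two $N_b$'s are nested or disjoint. A second observation (Lemma~4.4) shows that whenever $M\supsetneq N$ in $\cN$ with $\alpha(M)=\alpha(N)$, necessarily $M$ is of type~II and $N$ of type~I; hence along any chain $M\supsetneq N\supsetneq P$ the rank strictly drops. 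This gives well-foundedness immediately and, via the inclusion $\cN^{(\alpha+2m)}\subset\{N\in\cN:\alpha(N)\ge\alpha+m\}$ for limit $\alpha$ and $m<\omega$, the bound $\mathrm o(\cN)\le\lambda+2n+2$. The diameter estimate is then just $M\setminus\bigcup_{N\in S_M}N\subset M\cap[T]^{(\alpha(M))}$, since any $b\in M$ with $\alpha(b)<\alpha(M)$ already lies in $N_b\subsetneq M$.
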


\begin{proof}
 For $b\in[T]$ let $\alpha(b)$ be the ordinal $\alpha\leq\eta$ such
 that $b\in [T]^{(\alpha)}\setminus [T]^{(\alpha+1)}$. Then $b$ has a
 neighbourhood whose intersection with $[T]^{(\alpha)}$ has
 $d$-diameter less than $\vp$. If there exists a type~I neighbourhood
 of $b$ with that property (which is the case if $b\in\partial T$),
 then there exists a least $s\in b$ such that
 $d$-$\diam\big(U_s\cap [T]^{(\alpha)}\big)<\vp$, and in this case we
 set $N_b=U_s$. Otherwise $b$ is necessarily a finite branch $b_t$
 for some $t\in T$ and
 $d$-$\diam\big(U_t\cap [T]^{(\alpha)}\big)\geq\vp$. In this case
 there is a minimal (with respect to inclusion), finite, non-empty
 subset $F$ of $S_t$ such that the
 $d$-$\diam\Big(\big(U_t\setminus \bigcup_{s\in F}U_s\big)\cap
 [T]^{(\alpha)}\Big)<\vp$. Note that $F$ is not necessarily unique:
 we simply choose one such minimal $F$ and set
 $N_b=U_t\setminus \bigcup_{s\in F}U_s$. We do this for every
 $b\in[T]$ and set $\cN =\{ N_b:\, b\in[T]\}$. For $N\in\cN$ we let
 $\alpha(N)=\alpha(b)$ where $b\in[T]$ is such that $N=N_b$. Note
 that this definition does not depend on the choice of $b$. Indeed,
 we have
 \begin{equation}
   \label{E:4.1}
   \alpha(N)=\max \big\{ \beta\leq \eta:\, N\cap
         [T]^{(\beta)}\neq\emptyset \big\}=\min \Big\{ \beta:\,
         d\text{\,-}\diam\big(N\cap [T]^{(\beta)}\big)<\vp
         \Big\}\ .
 \end{equation}
 We now prove two simple facts. Recall that we identify $t\in T$ with
 the finite branch $b_t$. So we will sometimes write $N_t$ instead of
 $N_{b_t}$.
 \begin{lem}
   \label{L:4.3}
   Let $M_1,M_2\in\cN$. Then either $M_1\subset M_2$ or
   $M_1\supset M_2$ or $M_1\cap M_2=\emptyset$.
 \end{lem}
 \begin{proof}
   Let us first note that if $N\in \cN$ is of type~II, and thus of
   the form $N = U_t\setminus \bigcup_{s\in F} U_s$ for a unique
   $t\in T$ and finite, non-empty $F\subset S_t$, then for
   $b\in [T]$ we have $N=N_b$ if and only if $b=b_t$. It follows
   that if $N_1$ and $N_2$ in $\cN$ are both of type~II and of the
   form $N_1 = U_t\setminus \bigcup_{s\in F_1} U_s$ and
   $N_2 = U_t\setminus\bigcup_{s\in F_2}U_s$, then
   $N_1=N_{b_t}=N_2$.

   For each $i=1,2$, choose $t_i\in T$ and finite
   $F_i\subset S_{t_i}$, such that
   $M_i=U_{t_i}\setminus \bigcup _{s\in F_i}U_s$ (where the $F_i$
   could be empty, and thus $M_i$ be of type~I).  If $t_1$ and $t_2$
   are incomparable, then
   $M_1\cap M_2\subset U_{t_1}\cap U_{t_2}=\emptyset$. If $t_1=t_2$
   and one of $F_1$ and $F_2$ is empty, then $M_1\subset M_2$ or
   $M_1\supset M_2$. If $t_1=t_2$ and both $F_1$ and $F_2$ are
   non-empty, then $M_1$ and $M_2$ are type~II neighbourhoods, and
   hence, by the remark at the beginning of the proof,
   $b=b_{t_1}=b_{t_2}$ is the unique branch such that $M_1=M_2=N_b$.

  Finally, assume that $t_1$ and $t_2$ are comparable and
  distinct. We may without loss of generality assume that $t_1\prec
  t_2$. Let $s$ be the unique direct successor of $t_1$ with
  $s\preccurlyeq t_2$. Then either $s\in F_1$, and thus $M_1\cap
  M_2=\emptyset$, or $s\notin F_1$, and then $M_1\supset M_2$.
 \end{proof}
 Before the next lemma, we observe the following consequence
 of~\eqref{E:4.1}. If $M, N\in\cN$ and $M\supset N$, then
 $\alpha(M)\geq \alpha(N)$.
 \begin{lem}
   \label{L:4.4}
   Let $M,N\in\cN$. Assume that $M\supsetneq N$ and
   $\alpha(M)=\alpha(N)$. Then $M$ is of type~II and $N$ is of
   type~I.
\end{lem}
 %%%%%%%%%%%%%%%%
 \begin{proof}
   Set $\alpha=\alpha(M)=\alpha(N)$, and choose branches $b,c\in[T]$
   such that $M=N_b$ and $N=N_c$. Assume for a contradiction that $M$
   is of type~I. Then $M=U_t$ for some $t\in b$, and so the
   $d$-diameter of $U_t\cap[T]^{(\alpha)}$ is less than
   $\vp$. Since $M\supset N$, we have $t\in c$, and thus by the
   definition of $N_c$, we have $N_c= U_s$ with
   $s=\min\big\{ r\in c:
   d\text{-}\diam(U_r\cap[T]^{\alpha})<\vp\big\}$. But this implies
   that $s=t$ and we must have $M=U_t=N$, which is a
   contradiction. Thus $b$ is a finite branch $b_t$, say, and
   $M=U_t\setminus \bigcup_{s\in F}U_s$ for some non-empty, finite
   set $F\subset S_t$. Since $M\supsetneq N$, there must be an
   $s\in S_t\setminus F$ such that $c\in U_s$. Since $U_s\subset M$,
   it follows that
   $d$-$\diam \big( U_s\cap [T]^{(\alpha)}\big)<\vp$. Hence $N=U_s$,
   and so $N$ is of type~I.
 \end{proof}

 We shall make use the following immediate consequence of
 Lemma~\ref{L:4.4}. Given $M,N,P\in\cN$, if
 $M\supsetneq N\supsetneq P$, then $\alpha(M)>\alpha(P)$.

 \noindent{\it Continuation of the proof of Theorem~\ref{T:4.2}.}
 It follows from Lemma~\ref{L:4.3} that for $M\in\cN$ the set
 $b_M=\{N\in\cN:\,N\supset M\}$ is linearly ordered.  Write $M$ as
 $M=U_t\setminus\bigcup_{s\in F} U_s$ with $t\in T$ and
 $F\subset S_t$ finite.  To see that $b_M$ is finite, observe that if
 $N_1\supsetneq N_2$ in $\cN$ with $N_i=U_{t_i}\setminus
 \bigcup_{s\in F_i}U_s$ for some $t_i\in T$ and finite
 $F_i\subset S_{t_i}$ ($i=1,2$), then either $t_1\prec t_2$, or
 $t_1=t_2$ and $F_1=\emptyset\neq F_2$. This shows that the
 cardinality of $b_M$ is at most twice the cardinality of the set of
 predecessors of $t$, and thus $\cN$ is a tree. We next verify that
 $\cN$ is well-founded. Assume that there is an infinite sequence
 $N_1\supsetneq N_2\supsetneq N_3\supsetneq \dots$ in
 $\cN$. By~\eqref{E:4.1}, we have
 $\alpha(N_1)\geq\alpha(N_2)\geq\dots$, and hence this sequence of
 ordinals is eventually constant. Lemma~\ref{L:4.4} shows that this
 is not possible.

 We will now prove the stated upper bound on o$(\cN)$. Fix a limit
 ordinal $\alpha$ and assume that
 \begin{equation}
   \label{E:4.2}
   \cN^{(\alpha)}\subset \{ N\in\cN:\,\alpha(N)\geq \alpha\}\ .
 \end{equation}
 We show by induction that $\cN^{(\alpha+2m)}\subset \{ N\in\cN:\,
 \alpha(N)\geq \alpha+m\}$ for all $m<\omega$. The case $m=0$ is our
 base assumption. Now let $M\in\cN^{(\alpha+2m+2)}$. Then
 $M\supsetneq N\supsetneq P$ for some $N\in\cN^{(\alpha+2m+1)}$ and
 $P\in\cN^{(\alpha+2m)}$. By induction hypothesis we have
 $\alpha(P)\geq\alpha+m$, and hence, by Lemma~\ref{L:4.4}, we have
 $\alpha(M)\geq \alpha+m+1$. It remains to show that~\eqref{E:4.2}
 does in fact hold for all limit ordinals $\alpha$. This can be done
 by an easy induction argument. As we go from $\alpha$ to
 $\alpha+\omega$ in the induction step, we use the previous fact
 about finite ordinals. If $\alpha=\sup I$, where $I$ is the set of
 limit ordinals strictly smaller than $\alpha$, then
 \[
 \cN^{(\alpha)} =\bigcap_{\gamma\in I} \cN^{(\gamma)}\subset
 \bigcap_{\gamma\in I} \{N\in\cN:\,\alpha(N)\geq \gamma\}= \{
 N\in\cN:\,\alpha(N)\geq \alpha\}\ .
 \]
 We next establish the statement concerning $d$-diameters. Fix
 $M\in \cN$ and let $\alpha=\alpha(M)$. If
 $b\in M\setminus [T]^{(\alpha)}$, then $\alpha(b)<\alpha$. Thus
 $\alpha(N_b)<\alpha(M)$, and since $N_b\cap M$ contains $b$, we must
 have $N_b\subsetneq M$ using Lemma~\ref{L:4.3} and~\eqref{E:4.1}. It
 follows that $b\in N$ for some $N\in S_M$. We have proved that
 $M\setminus \bigcup_{N\in S_M}N \subset M\cap [T]^{(\alpha)}$, which
 shows that
 $d$-$\diam \Big(M\setminus \bigcup_{N\in S_M}N\Big)<\vp$.

 For the moreover part observe that $b\in N_b$ for all $b\in[T]$, and
 thus $[T]=\bigcup\cN$. Since $[T]$ is assumed to be compact, there
 is a finite cover of $[T]$ by some elements $N_1,N_2,\dots,N_k$ of
 $\cN$. By Lemma~\ref{L:4.3}, it follows that there can be at most
 $k$ initial nodes of $\cN$.
\end{proof}
\begin{lem}\label{L:4.5}
 Let $\cN$ be defined as in the proof of Theorem~\ref{T:4.2}. For
 $M\in\cN$ set $\Mt=M\setminus\bigcup _{N\in S_M} N$. Then the sets
 $\Mt$, $M\in\cN$, are pairwise disjoint, and for each $b\in [T]$
 there is an $M\in\cN$ so that $b\in\Mt$. Thus
 $\{\Mt:\,M\in\cN\}$ is a partition of $[T]$.
\end{lem}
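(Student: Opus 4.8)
The plan is to establish the two assertions separately, in both cases exploiting the structure of $\cN$ as a well-founded tree under reverse inclusion together with Lemma~\ref{L:4.3}. Recall from the proof of Theorem~\ref{T:4.2} that any two members of $\cN$ meeting in a common point are comparable under containment, that $\cN$ is a tree under $\supseteq$ (so each $b_M=\{N\in\cN:\,N\supseteq M\}$ is finite and linearly ordered), and that $\cN$ is well-founded; also note that distinct members of $S_M$ are incomparable (nothing lies strictly between a direct successor and $M$), hence pairwise disjoint by Lemma~\ref{L:4.3}.

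For the covering property, I would argue by a descent. Given $b\in[T]$, start with $M=N_b$, which contains $b$. If $b\in\Mt$ we are done; otherwise $b$ lies in some $N\in S_M$, and this $N$ is unique because the elements of $S_M$ are pairwise disjoint. Replace $M$ by $N$ and repeat. This produces a strictly decreasing chain $N_b=M_0\supsetneq M_1\supsetneq M_2\supsetneq\cdots$ in $\cN$ with $b\in M_k$ for all $k$. Since $\cN$ is well-founded, the process must terminate, and it can only terminate at a stage $M_k$ with $b\in\widetilde{M_k}$; this gives the required $M$.

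For disjointness, suppose $b\in\Mt_1\cap\Mt_2$ with $M_1,M_2\in\cN$ and $M_1\neq M_2$. Then $b\in M_1\cap M_2\neq\emptyset$, so by Lemma~\ref{L:4.3} we may assume $M_1\subsetneq M_2$. Consider the set $\{N\in\cN:\,M_1\subseteq N\subsetneq M_2\}=b_{M_1}\setminus\{M_2\}$ intersected with the predecessors of $M_1$ lying strictly below $M_2$; it is non-empty (it contains $M_1$), finite, and linearly ordered under containment. Let $N^*$ be its $\subseteq$-largest element. Then $M_1\subseteq N^*\subsetneq M_2$ and there is no $N\in\cN$ with $N^*\subsetneq N\subsetneq M_2$, so $N^*$ is a direct successor of $M_2$ in $(\cN,\supseteq)$, i.e.\ $N^*\in S_{M_2}$. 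But then $b\in M_1\subseteq N^*$ with $N^*\in S_{M_2}$ contradicts $b\in\Mt_2=M_2\setminus\bigcup_{N\in S_{M_2}}N$. Hence the sets $\Mt$, $M\in\cN$, are pairwise disjoint, and combined with the covering property they form a partition of $[T]$.

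The only point requiring care — and the main (mild) obstacle — is keeping the direction of the tree order straight: the tree $\cN$ is ordered by $\supseteq$, so ``well-founded'' rules out infinite \emph{strictly shrinking} chains of sets, which is exactly what makes the descent in the covering argument terminate, and the passage from the strict inclusion $M_1\subsetneq M_2$ to an actual direct successor of $M_2$ uses the finiteness of predecessor sets in a tree. No new estimates are needed.
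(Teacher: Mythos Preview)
Your proof is correct and follows essentially the same approach as the paper's. For disjointness both arguments use Lemma~\ref{L:4.3} to reduce to the case $M_1\subsetneq M_2$ and then produce a direct successor of $M_2$ containing $M_1$; for the covering property the paper simply takes the $\subseteq$-minimal $M\in\cN$ containing $b$ (using well-foundedness), which is the terminal stage of your descent.
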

\begin{proof}Let $M,N\in\cN$ with $\Mt\neq\Nt$. If
 $M\cap N=\emptyset$, then it is clear that $\Mt\cap\Nt=\emptyset$.
 Thus, by Lemma~\ref{L:4.3}, we may assume that $N\subsetneq M$. This
 means, since $\cN$ is a tree, that there is an $M'\in S_M$ with
 $N\subset M'$, which yields our first claim.

 Since $[T]=\bigcup\cN$, and since $\cN$ is a well-founded tree,
 there is a smallest (with respect to inclusion, or maximal in
 the order of $\cN$) $M\in \cN$ so that $b\in M$. This means that
 $b\notin N$ for any $N\in S_M$, which implies our
 second claim.
\end{proof}

By Lemma~\ref{L:4.5} we can define a map $q\colon [T]\to\cN$ by
letting $q(b)$ be the unique $M\in\cN$ such that $b\in \Mt$.
\begin{prop}
 \label{P:4.6} The map $q\colon [T]\to\cN$ defined above is onto. The
 quotient topology on $\cN$ induced by $q$ coincides with the tree
 topology of $\cN$.
\end{prop}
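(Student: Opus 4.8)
The plan is to establish surjectivity of $q$ by computing $q(b)$ explicitly, and then to deduce the topological claim from a soft compactness argument rather than by comparing the two topologies term by term. For surjectivity, recall from the proof of Lemma~\ref{L:4.5} that $q(b)$ is the smallest element of $\cN$, with respect to inclusion, that contains $b$ — a well-defined notion because $\{M\in\cN:\,b\in M\}$ is linearly ordered by inclusion (Lemma~\ref{L:4.3}) and has a least element, $\cN$ being well-founded. The crux is the claim that $q(b)=N_b$ for every $b\in[T]$; since $\cN=\{N_b:\,b\in[T]\}$, this immediately gives that $q$ is onto. Now $b\in N_b\in\cN$, so $q(b)\subseteq N_b$; assume for contradiction that $q(b)\subsetneq N_b$. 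Using~\eqref{E:4.1} together with the fact recorded after Lemma~\ref{L:4.3} that $M\supset N$ in $\cN$ implies $\alpha(M)\geq\alpha(N)$, one squeezes $\alpha(b)=\alpha(N_b)\geq\alpha(q(b))\geq\alpha(b)$, so $\alpha(N_b)=\alpha(q(b))$, and Lemma~\ref{L:4.4} forces $N_b$ to be of type~II and $q(b)$ of type~I. But in the construction a type~II set $N_b$ occurs only when $b$ is a finite branch $b_t$ and $N_b=U_t\setminus\bigcup_{s\in F}U_s$ for some non-empty finite $F\subseteq S_t$, whereas $q(b)=U_r$ of type~I with $b_t\in U_r$ forces $r\preccurlyeq t$, hence $U_t\subseteq U_r$. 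Then $q(b)=U_r=U_t\supsetneq U_t\setminus\bigcup_{s\in F}U_s=N_b$ (proper, since each $U_s$ with $s\in S_t$ is non-empty), contradicting $q(b)\subsetneq N_b$. This proves $q(b)=N_b$, and hence surjectivity.

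For the topological statement, the next step is to note that $q$ is continuous. Since $\cN$ is well-founded we have $[\cN]=\cN$, and the tree topology of $\cN$ is generated by the sets $U_N=\{P\in\cN:\,P\subseteq N\}$ together with their complements, $N\in\cN$. One checks that $q^{-1}(U_N)=N$: if $b\in N$ then $q(b)\subseteq N$, because $q(b)$ is the smallest member of $\cN$ containing $b$ and, by Lemma~\ref{L:4.3}, $N$ is comparable to $q(b)$; conversely $q(b)\subseteq N$ yields $b\in q(b)\subseteq N$. As every $N\in\cN$ is clopen in $[T]$, the preimage under $q$ of each subbasic open set of the tree topology of $\cN$ is open, so $q$ is continuous.

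To finish, observe that $[T]$ is compact by our standing assumption, and that $\cN$ with its tree topology is compact Hausdorff by Proposition~\ref{P:2.2}, because $\cN$ has finitely many initial nodes (Theorem~\ref{T:4.2}). A continuous surjection from a compact space onto a Hausdorff space is a closed map, and a continuous closed surjection is a quotient map; hence the quotient topology on $\cN$ induced by $q$ coincides with the tree topology of $\cN$. The one genuinely delicate step is the case analysis identifying $q(b)$ with $N_b$; everything afterwards is formal.
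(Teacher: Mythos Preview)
Your proof is correct and follows essentially the same route as the paper's: both establish surjectivity by showing $q(b)=N_b$ (the paper phrases this as verifying directly that $b\notin N$ for each $N\in S_{N_b}$, splitting into the cases $\alpha(N)<\alpha(N_b)$ and $\alpha(N)=\alpha(N_b)$, while you argue by contradiction and let Lemma~\ref{L:4.4} absorb the case split), and both obtain the topological statement from continuity of $q$ together with compactness of $[T]$ and Hausdorffness of the tree topology on $\cN$. One cosmetic point: you write $U_r=U_t$ after only deriving $U_t\subseteq U_r$; the equality does follow (since $q(b)\subseteq N_b\subseteq U_t$ also gives $U_r\subseteq U_t$), but you do not actually need it, as $q(b)=U_r\supseteq U_t\supsetneq N_b$ already contradicts $q(b)\subsetneq N_b$.
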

\begin{proof}
Let $M\in\cN$. We need to show that $\Mt\neq\emptyset$. Choose
$b\in[T]$ with $M=N_b$, and set $\alpha=\alpha(b)=\alpha(M)$. Let
$N\in S_M$. Then $\alpha(N)\leq\alpha$ by~\eqref{E:4.1}. If
$\alpha(N)<\alpha$, then $N$ is disjoint from $[T]^{(\alpha)}$, and
hence $b\notin N$. If $\alpha(N)=\alpha$, then $M$ is of type~II and
$N$ is of type~I by Lemma~\ref{L:4.4}. It follows that $b=b_t$ for
some $t\in T$, and $N\subset U_s$ for some $s\in S_t$.  But this
means that $t\not\in U_s$, and thus again, we have
$b=b_t\notin N$. This shows that $b\in \Mt$, and so $M=q(b)$ is in
the image of $q$.

We next observe that $q$ is continuous when $\cN$ is given the tree
topology. Indeed, let us fix $M\in\cN$ and $b\in [T]$, and set
$N=q(b)$. Then $b\in M$ if and only if $M\supset N$. Thus the
inverse image under $q$ of the basic clopen set $U_M$ (in the tree
topology of $(\cN, \supset)$, \ie, $U_M=\{N\in\cN: N\subset M\}$) in
$\cN$ is the clopen subset $M$ of $[T]$.  It follows that the
quotient topology of $\cN$ is finer than the tree topology. Since the
quotient topology is compact and the tree topology is Hausdorff, it
follows that these two topologies coincide, as claimed.
\end{proof}

\section{Zippin's theorem}
\label{S:5}
We now present our main result.
\begin{thm}
\label{T:5.1}
Let $X$ be an Asplund space, $(T,\preccurlyeq)$ be a tree with
finitely many initial nodes, and
$i\colon X\to C\big([T]\big)$ be an isometric embedding. Then for
all $\vp>0$ there exist a well-founded, compact tree $S$
with ordinal index
$\mathrm{o}(S)<\Sz(X,\vp/2)+\omega$ and an isometric copy $Y$ of
$C(S)$ in $C\big([T]\big)$ such that for all $x\in X$ there exists
$y\in Y$ with $\|i(x)-y\|\leq\vp\|x\|$.
\end{thm}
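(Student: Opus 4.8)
The plan is to apply Theorem~\ref{T:4.2} to the pseudo-metric on $[T]$ coming from the embedding $i$, and then to read off a copy of $C(S)$ from the resulting partition of $[T]$. First I would fix $\vp>0$ and define $d(b_1,b_2)=\sup_{x\in B_X}|i(x)(b_1)-i(x)(b_2)|$ for $b_1,b_2\in[T]$, exactly as in Example~\ref{Ex:4.1}. Since $X$ is Asplund and $b\mapsto\delta_b|_X$ is an isometry of $([T],d)$ into $(B_{X^*},\|\cdot\|)$, Theorem~\ref{T:3.6} gives that $[T]$ is $d$-fragmentable; moreover $\Frag(d,[T],\vp)\le \Sz(X,\vp)$ directly from the definitions, since the $\vp$-derivatives of $[T]$ under $d$ map forward into the $\vp$-Szlenk derivatives of $B_{X^*}$. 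Applying Theorem~\ref{T:4.2} with parameter $\vp/2$ in place of $\vp$ (this is where the $\vp/2$ in the statement comes from), I obtain a well-founded compact tree $\cN\subset\cB$ under containment with $\bigcup\cN=[T]$, with $\mathrm{o}(\cN)\le\lambda+2n+2$ where $\eta+1=\Frag(d,[T],\vp/2)$ and $\eta=\lambda+n$, and with $d\text{-}\diam(\Mt)<\vp/2$ for every $M\in\cN$, where $\Mt=M\setminus\bigcup_{N\in S_M}N$. I set $S=\cN$; the ordinal bound $\mathrm{o}(S)=\mathrm{o}(\cN)\le\lambda+2n+2<\eta+\omega\le\Sz(X,\vp/2)+\omega$ follows, since $\eta<\Frag(d,[T],\vp/2)\le\Sz(X,\vp/2)$ and adding $\omega$ absorbs $2n+2$.

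Next I would build the embedding of $C(S)$. By Proposition~\ref{P:4.6} the quotient map $q\colon[T]\to\cN$ (sending $b$ to the unique $M$ with $b\in\Mt$) is continuous and onto when $\cN$ carries its tree topology, so the adjoint $q^*\colon C(\cN)\to C([T])$, $g\mapsto g\circ q$, is a linear isometric embedding; let $Y=q^*(C(\cN))=\{g\circ q:\,g\in C(S)\}$. Concretely $Y$ is the space of those continuous functions on $[T]$ that are constant on each cell $\Mt$ of the partition $\{\Mt:\,M\in\cN\}$ from Lemma~\ref{L:4.5}, and since $S=\cN$ is a countable (being a subset of the countable set... — here one should note $[T]$ need not be countable, but $\cN\subset\cB$ and $\cB$ is indexed by $T$, so $|\cN|\le|T|$; this does not affect the argument) well-founded compact tree, $C(S)$ is a well-defined $C(K)$-space and $Y$ is an isometric copy of it in $C([T])$.

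It remains to produce, for each $x\in X$, a $y\in Y$ with $\|i(x)-y\|\le\vp\|x\|$. By homogeneity I may assume $\|x\|=1$, so $i(x)=f_1\in C([T])$ with $\|f_1\|_\infty\le1$. For each $M\in\cN$ pick a point $b_M\in\Mt$ (nonempty by Proposition~\ref{P:4.6}) and define $g\colon\cN\to\R$ by $g(M)=f_1(b_M)$; let $y=g\circ q\in Y$. For $b\in[T]$, letting $M=q(b)$, we have $b,b_M\in\Mt$, so $|f_1(b)-y(b)|=|f_1(b)-f_1(b_M)|=|i(x)(b)-i(x)(b_M)|\le d(b,b_M)<\vp/2<\vp$, using $d\text{-}\diam(\Mt)<\vp/2$ and $\|x\|=1$; here I would also check that $g$ is continuous on $\cN$, which follows from Lemma~\ref{lem:general-topology} applied to $g$ on the compact Hausdorff space $S=\cN$ together with the oscillation bound just established (every point of $\cN$ has a neighbourhood — indeed its basic clopen neighbourhoods — on which the oscillation of the function $M\mapsto f_1(b_M)$ is at most $\vp/2$, since $q^{-1}(U_M)$ has small $d$-diameter once one descends far enough, by the type~I/type~II analysis). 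The only real subtlety, and the step I expect to need the most care, is precisely this continuity verification: $g$ need not be continuous on the nose, so one either replaces it by a nearby continuous function via Lemma~\ref{lem:general-topology} (at the cost of a harmless factor absorbing into $\vp$), or argues directly that the naive choice is already continuous because the cells $\Mt$ together with the tree structure control oscillation on small clopen sets. Choosing $\vp/2$ at the start gives exactly the slack needed to run Lemma~\ref{lem:general-topology} and still land inside $\vp\|x\|$.
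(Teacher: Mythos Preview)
Your overall architecture is exactly the paper's: define the pseudo-metric $d$ from Example~\ref{Ex:4.1}, apply Theorem~\ref{T:4.2} at level $\vp/2$ to obtain the well-founded compact tree $\cN$, embed $C(\cN)$ isometrically via the quotient map $q$ of Proposition~\ref{P:4.6}, and then approximate $i(x)$ by a function constant on the cells $\Mt$. The ordinal bookkeeping and the use of Lemma~\ref{lem:general-topology} are also the same.

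There is, however, a genuine gap in the one step you flag as delicate. Your justification that the sampled function $g(M)=i(x)(b_M)$ has small oscillation near each $M$ is not right: you write that ``$q^{-1}(U_M)$ has small $d$-diameter once one descends far enough,'' but $q^{-1}(U_M)=M$, and $M$ itself typically has large $d$-diameter (only $\Mt$ is small). The ``type~I/type~II analysis'' plays no role here. What actually works---and what the paper does---is a compactness argument: since $g=i(x)$ is continuous on the compact set $M$ and its oscillation on $\Mt=M\setminus\bigcup_{N\in S_M}N$ is below $\vp/2$, the closed set $\{(b,c)\in M\times M:\,|g(b)-g(c)|\ge\vp/2\}$ is covered by the open sets $M\times N\cup N\times M$ with $N\in S_M$, hence by finitely many of them, say $N\in F_M$. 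Then $g$ has oscillation $<\vp/2$ on $M\setminus\bigcup_{N\in F_M}N$, and this pulls back to the basic neighbourhood $U_M\setminus\bigcup_{N\in F_M}U_N$ of $M$ in $\cN$, on which the sampled function has oscillation $<\vp/2$. Now Lemma~\ref{lem:general-topology} gives a continuous $f$ with $|f-g|\le\vp/2$ on $\cN$, and $y=f\circ q$ satisfies $\|i(x)-y\|\le\vp$ by the triangle inequality. Your alternative suggestion that $g$ might already be continuous should be discarded: in general it is not, and the $\vp/2$ slack is genuinely spent on Lemma~\ref{lem:general-topology}. Finally, drop the parenthetical about countability of $\cN$; it is irrelevant to the argument (and $\cB$ is not indexed by $T$ alone).
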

\begin{proof}
Consider the pseudo-metric $d$ on $[T]$ defined as follows.
\[
d(b,c)=\sup_{x\in B_X} |i(x)(b)-i(x)(c)|\ ,\qquad b,c\in
[T]\ .
\]
We identify $b\in [T]$ with its Dirac measure $\delta_b$.
Note that the dual map $i^*$ sends $[T]$ onto a $w^*$-closed,
1-norming subset of $B_{X^*}$,  and
\[
\|i^*(\delta_b)-i^*(\delta_c)\|_{X^*}=\sup_{x\in B_X}
|i(x)(b)-i(x)(c)|=d(b,c)\qquad \text{for all }b,c\in [T]\ .
\]
Fix $\vp>0$. It follows from above that the $\frac\vp2$-fragmentation
index of $[T]$ with respect to $d$ is equal to
$\Sz\big(i^*([T]),\frac\vp2\big)\le \Sz\big(X,\frac\vp2\big)$.
(See Example~\ref{Ex:4.1}.)

Theorem~\ref{T:4.2}, applied to $\frac\vp2$, provides us with a
well-founded, compact tree $\cN$ of basic clopen subsets of $[T]$
with 
$\mathrm{o}(\cN)<
\Frag\big(d,[T],\frac\vp2\big)+\omega=\Sz\big(i^*([T]),
\frac\vp2\big) + \omega\leq \Sz\big(X,\frac\vp2\big)+\omega$ such
that
\[
\text{$d$-$\diam$} \Big( M\setminus \bigcup_{N\in S_M} N \Big)
<\frac\vp2
\]
for all $M\in\cN$. By Proposition~\ref{P:4.6},  we also have a
quotient map $q\colon [T]\to \cN$, which is continuous with respect
to the tree topologies of $[T]$ and $\cN$. Thus, we have an
isometric embedding $q^*\colon C(\cN)\to C\big([T]\big)$ given by
$f\mapsto f\circ q$. Let $Y$ be the image of $q^*$. We will now show
that $Y$ is $\vp$-close to $i(X)$ which will prove the theorem
with $S=\cN$.

Let $x\in B_X$ and $g=i(x)$. Then $g$ is a continuous function on
$[T]$ whose oscillation on $\Mt=M\setminus \bigcup _{N\in S_M}N$ is
less than $\frac\vp2$ for all $M\in \cN$. Indeed, we
have
\[
|g(b)-g(c)|=|i(x)(b)-i(x)(c)|\leq d(b,c) <\frac\vp2\qquad \text{for
  all }b,c\in \Mt \text{ and }M\in\cN\ .
\]
For each $M\in\cN$ fix a point $x_M\in \Mt$ and set
$f_1(M)=g(x_M)$. This defines a function $f_1\colon\cN\to\R$. We will
now show that $f_1$ is not far from being continuous, and hence it is
not far from a continuous function.

Fix $M\in\cN$. Since the oscillation of $g$ on $\Mt$ is smaller
than $\frac\vp2$, it follows that
\[
\Big\{ (b,c)\in M\times M :\,|g(b)-g(c)|\ge \frac\vp2\Big\}\subset
\bigcup_{N\in S_M} (M\times N\cup N\times M)\ ,
\]
and thus by compactness of the left-hand set, there is a finite
set $F_M\subset S_M$ so that
\[
\Big\{ (b,c)\in M\times M:\,|g(b)-g(c)|\ge \frac\vp2\Big\}\subset
\bigcup_{N\in F_M} (M\times N\cup N\times M)\ ,
\]
and hence
\begin{equation}
  \label{E:5.1.1}
  |g(b)-g(c)|< \frac\vp2 \qquad\text{for all }b,c\in M\setminus
  \bigcup _{N\in F_M} N\ .
\end{equation}
Since for $P\in\cN$ if $P\in U_M\setminus \bigcup_{N\in F_M} U_N$,
then $x_P\in\Pt\subset M\setminus \bigcup_{N\in F_M} N$, it follows
from~\eqref{E:5.1.1} above that for all
$P,Q\in U_M\setminus \bigcup_{N\in F_M} U_N$, we have
\[
|f_1(P)-f_1(Q)|=|g(x_P)-g(x_Q)| <\frac\vp2\ .
\]
We have shown that in the compact space $\cN$ every point has a
neighbourhood on which the oscillation of the function $f_1$
is at most $\frac\vp2$. An application of
Lemma~\ref{lem:general-topology} now yields a
continuous function $f\colon\cN\to\R$ which is $\frac\vp2$-close to
$f_1$. We complete the proof by showing that $y=q^*(f)$ is
$\vp$-close to $g=i(x)$. Indeed, given $b\in [T]$, for $M=q(b)$ we
have $b,x_M\in\Mt$, and hence
\[
|y(b)-g(b)| = |f(M)-g(b)| \leq |f(M)-f_1(M)|+|g(x_M)-g(b)|\leq
\vp\ ,
\]
as required.
\end{proof}
From Theorem~\ref{T:5.1} the following (slight) sharpening of Zippin's
Theorem follows. Recall that $D$ denotes the Cantor set.
\begin{cor}
 \label{C:5.2}
 Let $X$ be a Banach space with separable dual and
 $i\colon X\to C(D)$ an isometric embedding (which always
 exists). Then for all $\vp>0$ there is a countable ordinal
 $\alpha<\omega^{\Sz(X,\vp/2)+\omega}$ and a subspace $Y$ of $C(D)$
 isometric to $C[0,\alpha]$ such that for all $x\in X$ there exists
 $y\in Y$ with $\|i(x)-y\|\leq \vp\|x\|$.
\end{cor}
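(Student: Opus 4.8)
The plan is to derive this as an almost immediate consequence of Theorem~\ref{T:5.1}, using two facts established earlier: first, that every Banach space with separable dual embeds isometrically into $C(D)$ (this is classical — a separable dual space is Asplund, and $C(D)$ is isometrically universal for separable Banach spaces); and second, the combinatorial correspondence between countable well-founded compact trees and countable ordinal intervals from Theorem~\ref{thm:trees-vs-ordinals}. Note that $D$ itself is the tree space $[T_0]$ for the tree $T_0=[\N]^{<\omega}$ of Example~\ref{Ex:2.6}, so the hypothesis of Theorem~\ref{T:5.1} is met with $[T]=D$.

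First I would fix $\vp>0$ and apply Theorem~\ref{T:5.1} with this $X$, this embedding $i$, and this $\vp$. Since $X$ has separable dual it is an Asplund space, so the theorem applies and yields a well-founded, compact tree $S$ with $\mathrm{o}(S)<\Sz(X,\vp/2)+\omega$ and an isometric copy $Y$ of $C(S)$ inside $C(D)$ such that every $i(x)$ is $\vp\|x\|$-close to some $y\in Y$. The remaining task is purely to identify $C(S)$ with some $C[0,\alpha]$ for a suitably bounded countable $\alpha$.

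Next I would check that $S$ is countable. This is where one must be slightly careful: $D$ is uncountable, so a priori $\cN$ could be uncountable. However, $S=\cN$ consists of basic clopen subsets of $D=[T_0]$, and the collection of basic clopen sets of $[T_0]$ — each determined by a node $t\in T_0$ and a finite $F\subset S_t$ — is countable, since $T_0=[\N]^{<\omega}$ is countable and each $S_t$ is countable; so $\cN$ is automatically countable. (Alternatively, since $X$ has separable dual, $\Sz(X,\vp/2)$ is a countable ordinal by the Remark following the definition of the fragmentation index, applied via Theorem~\ref{T:3.6} to the Polish space $(B_{X^*},w^*)$; hence $\mathrm{o}(\cN)$ is countable, and the injectivity of $\beta\mapsto \cN^{(\beta)}\setminus\cN^{(\beta+1)}$ forces $\cN$ itself to be countable.) Now $S$ is a countable, well-founded, compact tree, so by Theorem~\ref{thm:trees-vs-ordinals} (or rather the first Remark after it) $S$ with its tree topology is homeomorphic to an ordinal interval $[0,\alpha]$ with $\alpha<\omega^{\mathrm{o}(S)}$. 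Since $\mathrm{o}(S)<\Sz(X,\vp/2)+\omega$, we get $\alpha<\omega^{\Sz(X,\vp/2)+\omega}$; and $\alpha$ is countable because $S$ is. A homeomorphism $S\cong[0,\alpha]$ induces an isometric isomorphism $C(S)\cong C[0,\alpha]$, so the subspace $Y$ of $C(D)$ is isometric to $C[0,\alpha]$, and the approximation property $\|i(x)-y\|\le\vp\|x\|$ transfers verbatim.

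I do not expect a genuine obstacle here — the corollary is a packaging of Theorem~\ref{T:5.1} together with the tree-to-ordinal dictionary. The only point requiring a line of justification, rather than mere citation, is the countability of $S$, and for that the observation that the basis $\cB$ of $[T_0]$ is countable (so its subset $\cN$ is too) is the cleanest route. Everything else — the existence of the isometric embedding into $C(D)$, the homeomorphism $[0,\alpha)\cong S\setminus\{r\}$ versus $[0,\alpha]\cong S$, and the induced isometry of $C$-spaces — is standard and can be dispatched quickly.
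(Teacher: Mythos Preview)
Your proposal is correct and follows essentially the same route as the paper: identify $D$ with $[T_0]$ via Example~\ref{Ex:2.6}, apply Theorem~\ref{T:5.1}, observe that $S=\cN$ is countable because it sits inside the countable basis $\cB$ of clopen sets, and then invoke Theorem~\ref{thm:trees-vs-ordinals} and the remark after it to replace $C(S)$ by $C[0,\alpha]$ with the stated bound on $\alpha$. One small caveat: your parenthetical alternative argument for countability is not valid as stated---a countable ordinal index $\mathrm{o}(\cN)$ does \emph{not} force $\cN$ to be countable (an uncountable antichain has ordinal index $1$), so stick with the basis argument, which is exactly what the paper does.
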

\begin{proof}
 Since $X$ is separable, we can think of it as a subspace of
 $C(D)$. As explained in Example~\ref{Ex:2.6}, $D$ can be seen as the
 set of all branches of a tree $T$ endowed with the tree
 topology. Applying now Theorem~\ref{T:5.1}, we obtain a
 well-founded, compact tree $S$ with ordinal index
 $\mathrm{o}(S)<\Sz(X,\vp/2)+\omega$ and an isometric copy $Y$ of
 $C(S)$ in $C(D)$ such that for all $x\in X$ there exists $y\in Y$
 with $\|i(x)-y\|\leq\vp\|x\|$. It follows from the proof of
 Theorem~\ref{T:5.1} that $S$ is a subset of the basis $\cB$ of $D$
 consisting of clopen sets, so in particular $S$ is countable. By
 Theorem~\ref{thm:trees-vs-ordinals} and by the subsequent remark,
 there is a countable ordinal $\alpha$ such that $S$ is homeomorphic
 to $[0,\alpha]$, and moreover
 \[
 \alpha<\omega^{\mathrm{o}(S)}<\omega^{\Sz(X,\vp/2)+\omega}\ ,
 \]
 as claimed.
\end{proof}

We note the following corollary of Theorem~\ref{T:5.1}.
\begin{cor}
Let $T$ be a tree with finitely many initial nodes. If $T$ is
well-founded, then $C(T)$ is an Asplund space. Conversely, if
$C\big([T]\big)$ is an Asplund space, then there is a well-founded,
compact tree $S$ homeomorphic to $[T]$.
\end{cor}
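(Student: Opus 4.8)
The plan is to treat the two implications separately: the first via the Cantor--Bendixson index together with the dichotomy~\eqref{E:3.1}, and the second as an essentially immediate consequence of Theorem~\ref{T:5.1}.

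For the first implication, suppose $T$ is well-founded with finitely many initial nodes. Then $\partial T=\emptyset$, so $[T]=T$, and by Proposition~\ref{P:2.2} the tree space $T$ is compact. I would first show that $T$, with its tree topology, is scattered, and in fact that $\CB(T)<\infty$. The key point is that every non-empty closed subset $F$ of $T$ has an isolated point. Indeed, $F$ cannot contain an infinite strictly increasing chain $t_1\prec t_2\prec\cdots$, since such a chain generates an infinite branch (as observed in the remark following the definition of a branch), contradicting $\partial T=\emptyset$; hence $F$ has a $\preccurlyeq$-maximal element $t$. If $b\in F\cap U_t$ and $b\neq b_t$, then $b$ properly extends $b_t$, so its (necessarily finite) maximal element $t'$ satisfies $t\prec t'$ and $b_{t'}=b\in F$, contradicting maximality of $t$. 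Therefore $F\cap U_t=\{b_t\}$, and since $U_t$ is open, $b_t$ is isolated in $F$. It follows that whenever $d_\alpha(T)\neq\emptyset$ one has $d_{\alpha+1}(T)\subsetneq d_\alpha(T)$, so the Cantor--Bendixson derivation terminates at $\emptyset$ and $\CB(T)<\infty$. Now identify $T$ with the set of Dirac measures inside $B_{C(T)^*}$; this is a compact, $1$-norming subset, and by Remark~\ref{rem:szlenk-of-norming-set} its Szlenk index equals $\CB(T)$, hence is an ordinal. Consequently the alternative in~\eqref{E:3.1} in which this Szlenk index equals $\infty$ (which occurs precisely when $C(T)$ fails to be Asplund) cannot hold, so $C(T)$ is an Asplund space.

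For the converse, assume $C\big([T]\big)$ is an Asplund space; note that $[T]$ is compact by Proposition~\ref{P:2.2}. Apply Theorem~\ref{T:5.1} with $X=C\big([T]\big)$, with $i$ the identity embedding, and with $\vp=\tfrac12$: this produces a well-founded, compact tree $S$ and an isometric copy $Y$ of $C(S)$ inside $C\big([T]\big)$ such that every $x\in C\big([T]\big)$ admits a $y\in Y$ with $\|x-y\|\le\tfrac12\|x\|$. Since $Y$ is a closed subspace (being a linear isometric image of the complete space $C(S)$) and $\tfrac12<1$, a routine iteration---replace $x$ by $x-y$ and repeat, obtaining $\|x-(y_1+\dots+y_n)\|\le 2^{-n}\|x\|$---shows that $Y$ is dense, hence $Y=C\big([T]\big)$. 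Thus $C(S)$ and $C\big([T]\big)$ are isometrically isomorphic; as $S$ and $[T]$ are compact Hausdorff spaces, the Banach--Stone theorem yields a homeomorphism between $[T]$ and $S$. Alternatively, one can use the explicit quotient map $q\colon[T]\to S$ from the proof of Theorem~\ref{T:5.1}: surjectivity of $q^*$ forces $q$ to separate the points of $[T]$, so $q$ is a continuous bijection from the compact space $[T]$ onto the Hausdorff space $S$, hence a homeomorphism.

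I expect the only real subtlety to be the scatteredness argument in the first part, namely checking that a $\preccurlyeq$-maximal element of a closed set $F$ is genuinely topologically isolated in $F$ even when it has infinitely many direct successors; the point is precisely that maximality in $F$ forces all of the corresponding successor cones $U_s$ to miss $F$. The remaining ingredients---compactness of $[T]$, the identity $\Sz=\CB$ for the Dirac set, the dichotomy~\eqref{E:3.1}, and the iteration trick in the converse---are all either already established in the paper or entirely standard.
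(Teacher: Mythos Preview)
Your proof is correct and follows essentially the same route as the paper: the first implication via $\CB(T)<\infty$ together with Remark~\ref{rem:szlenk-of-norming-set} and~\eqref{E:3.1}, and the converse via Theorem~\ref{T:5.1} with $\vp=\tfrac12$ followed by Banach--Stone. The only difference is cosmetic: where the paper simply invokes~\eqref{E:2.1} to get $\CB(T)\le\mathrm{o}(T)<\infty$, you instead supply a direct argument that every non-empty closed $F\subset T$ has an isolated point; both yield the same conclusion.
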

\begin{proof}
 Assume that $T$ is well-founded. It follows that the ordinal index
 $\mathrm{o}(T)$ of $T$ exists. As explained in
 Remark~\ref{rem:szlenk-of-norming-set}, we can identify elements
 $t\in T$ with their Dirac measure $\delta_t$, and hence view $T$ as
 a $w^*$-closed, 1-norming subset of $B_{C(T)^*}$. Since
 $\|\delta_s-\delta_t\|=2$ for $s\neq t$ in $T$, it follows
 from~\eqref{E:2.1} for $0<\vp<2$ that
 $\Sz(T)=\Sz(T,\vp)=\CB(T)\le \mathrm{o}(T)$. Thus, in particular,
 $\Sz(T)\neq\infty$, and hence it follows from~\eqref{E:3.1} that
 $C(T)$ is Asplund.

 Now assume that $C([T])$ is Asplund. The we apply
 Theorem~\ref{T:5.1} to $X=C([T])\subset C([T])$ and $\vp=\frac12$
 and find a well-founded, compact tree $S$ and a closed subspace $Y$
 of $C\big([T]\big)$ as in the statement. Since now every element of
 $C\big([T]\big)$ has to be close to an element of $Y$, it follows
 that $Y=C\big([T]\big)$. Since $Y$ is isometric to $C(S)$, it
 follows from the Banach--Stone Theorem that $[T]$ is homeomorphic to
 $S$.
\end{proof}

\end{document}